\documentclass[11pt,a4paper]{amsart} 

\author{Thanatkrit Kaewtem}
\address{School of Mathematics\\
  University of Bristol\\
  Bristol BS8 1TW, UK} 
\email 
{tk13633@bristol.ac.uk}

\usepackage{amsmath,amssymb,amsthm,latexsym,graphicx,indentfirst,subfig, pb-diagram,wasysym} 
\usepackage{amsmath,amssymb,amsthm 
}
\usepackage{calrsfs}
\usepackage{fullpage}

\numberwithin{equation}{section}

\theoremstyle{plain}
\newtheorem{thm}{Theorem} 
\newtheorem{lem}[thm]{Lemma}
\newtheorem{cor}[thm]{Corollary}
\newtheorem{prop}[thm]{Proposition}

\theoremstyle{definition}
\newtheorem{defn}[thm]{Definition}
\newtheorem{example}[thm]{Example}
\newtheorem{rem}[thm]{Remark}

\newcommand{\R}{\mathbb{R}}
\newcommand{\N}{\mathbb{N}}

\usepackage[]{graphicx}
\title{Entropy numbers in $\gamma$-Banach spaces}

\begin{document}
\maketitle    

\begin{abstract}
Let $X$ be a quasi-Banach space, $Y$ a $\gamma$-Banach space $(0<\gamma \leq 1)$ and $T$ a bounded linear operator from $X$ into $Y$. 
In this paper, we prove that the first outer entropy number of $T$ lies between $2^{1-1/\gamma}\|T\|$ and $\|T\|$
; more precisely, $2^{1-1/\gamma}\|T\| \leq e_1(T) \leq \|T\|,$ and the constant $2^{1-1/\gamma}$ is sharp.
 Moreover, we show that there exist a Banach space $X_0$, a $\gamma$-Banach space $Y_0$ and a bounded linear operator $T_0:X_0 \rightarrow Y_0$ such that $0 \neq e_k(T_0) = 2^{1-1/\gamma}\|T_0\| $ for all positive integers $k.$ 
 Finally, the paper also provides two-sided estimates for entropy numbers of embeddings between finite dimensional symmetric $\gamma$-Banach spaces.
\end{abstract}

\section{Introduction}

Entropy numbers of a bounded linear operator $T$ provide a tool to measure the degree of compactness of $T.$
The ideas underlying the concept of entropy numbers go back a long way to the work of Pontryagin and Schnirelmann (1932), 
and Kolmogorov (1956), on the metric entropy of compact subsets of a metric space (see \cite{P2}).
An early remarkable result was that of Kolmogorov and Tikhomirov in 1959: this concerned the embedding of $C^k([0,1]^n)$ in $C([0,1]^n),$ where $k \in \N$ (see \cite{Kol&Ti}).  
Vitushkin and Henkin (1967) applied this idea in their work on the superposition of functions related to Hilbert's thirteenth problem (see \cite{Vitushkin&Henkin}).
The knowledge of entropy numbers of sets, and the corresponding idea of entropy numbers of bounded linear operators acting between Banach spaces, have been developed to such an extent that they are one of the important tools in analysis, especially in approximation theory.
For the background and applications of entropy numbers, we refer to any textbooks given in the reference, especially  \cite{P0,P1,C&S, E&T}.

For $\gamma \in (0,1],$ a {\it $\gamma$-norm} on a vector space $X$ is a function $\|\cdot | X\|:X \rightarrow [0,\infty)$ which satisfies the properties of norms, but instead of the triangle inequality, the inequality 
\begin{equation} \label{Eq-DefofGammaNorm}
	\|x+y | X\|^\gamma \leq \|x | X\|^\gamma + \|y | X\|^\gamma
\end{equation}
 holds for any $x,y \in X$.
 It is clear that every $\beta$-norm is a $\gamma$-norm for $0<\gamma \leq \beta \leq 1.$
 If (\ref{Eq-DefofGammaNorm}) is replaced by 
 \begin{equation}  \label{Eq-DefofQuasiNorm}
 \|x+y | X\| \leq C(\|x | X\|+ \|y | X\|)
 \end{equation}
  for some constant $C \geq 1$, then it is called a {\it quasi-norm}. 
According to the Aoki-Rolewicz theorem (\cite{Aoki} and \cite{Rolewicz}), if $X$ is a quasi-Banach space with a constant $C$, then there exists $\gamma \in (0,1]$ (in fact, $ 2^{1/\gamma-1} = C$) and a $\gamma$-norm on $X$ which is equivalent to the original quasi-norm. 

	Let $X$ be a quasi-Banach space, $Y$ a $\gamma$-Banach space and $T$ a bounded linear operator from $X$ into $Y$. 
	If $Y$ is a Banach space, it is well-known that  the first outer entropy number of $T$,  the first inner entropy number of $T$ and the norm of $T$ are identical; i.e.,  $e_1(T) = f_1(T) = \|T\|.$
In general, when $Y$ is a $\gamma$-Banach space, these numbers are related by 
\begin{equation} \label{Intro-First-Outter-Inner-Entropy-Estimate}
	2^{1-1/\gamma}\|T\| \leq e_1(T) \leq \|T\| \leq f_1(T) \leq  2^{1/\gamma-1}\|T\|
\end{equation}
(see Theorem \ref{SharpConstant2} and Theorem \ref{SharpConstant1}).
The constant $2^{1/\gamma-1}$ in (\ref{Intro-First-Outter-Inner-Entropy-Estimate}) is best possible.
Indeed, by considering the identity map $I:\ell_{\gamma} \rightarrow \ell_{\gamma}$, it can be demonstrated that $f_k(I) = 2^{1/\gamma -1} \|I\|$ for all $k \in \N$  (see Theorem \ref{SharpConstant2} below).
 The first aim of this paper is to show that there exist a Banach space $X$, a $\gamma$-Banach space $Y$ and a bounded linear operator $T:X \rightarrow Y$ such that $e_k(T) = 2^{1-1/\gamma}\|T\|$ for every $k \in \N$ (Theorem \ref{SharpConstant3}).
 This result implies that the constant $2^{1-1/\gamma}$ in (\ref{Intro-First-Outter-Inner-Entropy-Estimate}) is sharp.
Next, we consider the metric injection property of entropy numbers (see Definition \ref{Def-Metric-Injection}). 
Adapting the results in [2], p.125, we show that, for any $k \in \N,$ 
\begin{equation}  \label{Intro-Metric-Inj-Etimate}
	e_{k}(\iota T) \leq e_{k}(T) \leq 2^{1/\gamma}e_k(\iota T),
\end{equation} 
where $\iota$ is a metric injection, and the constant $2^{1/\gamma}$ in (\ref{Intro-Metric-Inj-Etimate}) cannot be reduced  (Proposition \ref{InjectivityOff} and Example \ref{Ex-constant-cannot-be-reduced}).
 
Let $X$ be a Banach space with a symmetric basis $(x_i)_{i=1}^\infty$ (see section 4 for the definition of symmetric base).
A {\it fundamental function} of $X,$ $\varphi_X,$ and a fundamental function of $X^\ast$ (the dual space of $X$),$ \varphi_{X^\ast},$ are defined, respectively, by 
\begin{equation}
	\varphi_X(m) := \left\| \sum_{i=1}^{m}x_i | X  \right\| ~\text{and}~ \varphi_X^\ast(m) := \left\| \sum_{i=1}^{m}x_i^{(\ast)} | X^\ast  \right\|
\end{equation}
for any $m \in \N.$
Here the functional $x_j^{(\ast)}$ is given by $x_j^{(\ast)} (x_i) = \delta_{i,j}.$  
It is well-known that 
\begin{equation} \label{Eq-Intro-ProductofFundaFn}
	\varphi_X(m) \varphi_{X^\ast}(m) = m
\end{equation}
 for all $m \in \N.$
In 1984, Sch\"{u}tt (see \cite{Sch}, Lemma 4) used the equation (\ref{Eq-Intro-ProductofFundaFn}) and volume arguments to prove that if $id:X \rightarrow Y$ is the natural embedding between any $n$-dimensional symmetric Banach spaces $X$ and $Y,$ then, for $k \geq n,$ there are absolute constants $c_1,c_2$ such that
	\begin{equation} \label{Eq-Intro-EntropyOfEmbeddingInFiniteDimB}
		c_1 2^{-k/n} \frac{\varphi_Y(n)}{\varphi_X(n)} \leq e_k(id:X \rightarrow Y) \leq c_2 2^{-k/n} \frac{\varphi_Y(n)}{\varphi_X(n)}.
	\end{equation}
 In the last section, we prove that (\ref{Eq-Intro-EntropyOfEmbeddingInFiniteDimB}) is still valid if $X$ and $Y$ are $n$-dimensional symmetric $\gamma$-Banach spaces (Theorem \ref{Thm-EntropyInFinteDim-FundamentalFn1}). 
 As we know that dual spaces of $\gamma$-Banach spaces might not have rich structures,
  to prove Theorem \ref{Thm-EntropyInFinteDim-FundamentalFn1},  different techniques are needed.
  For the sake of completeness, let us mention the case $k$ is small $(k \leq n).$
  If $X$ and $Y$ are $n$-dimensional symmetric Banach spaces, Sch\"{u}tt (see \cite{Sch}, Theorem 5) also provided two-sided estimates for $e_k(id:X \rightarrow Y);$ however, they were not sharp.
  Later, in 1998, the sharp two-sided estimates for $e_k(id:X \rightarrow Y),$ where $X$ and $Y$ are $n$-dimensional symmetric $\gamma$-Banach spaces,  were given by Edmunds and Netrusov   (see \cite{E&N1998}, Section 4, Theorem 2), but the case $k\geq n$ was left open. 
  The result in the paper fills this gap (see Theorem \ref{Thm-EntropyInFinteDim-FundamentalFn1}).


\section{Notations and preliminaries}

In this section we collect some basic facts, conventions and definitions that will be used later. 
All spaces considered will be assumed to be real vector spaces.
Given quasi-Banach spaces $X$ and $Y$, we write $\mathcal{B}(X,Y)$ for the space of all bounded linear maps from $X$ into $Y,$ and write $\mathcal{B}(X)$ if $X=Y.$ The symbol 
$B_X$ stands for the closed unit ball in $X$. 

Let $T \in \mathcal{B}(X,Y)$ and $k \in \N.$
The $k^{\text{th}}$ (dyadic) {\it outer entropy number} $e_k(T)$ of $T$ is defined to be the infimum of all those $\varepsilon >0$ such that $T(B_X)$ can be covered by $2^{k-1}$ balls in $Y$ with radius $\varepsilon.$
The numbers $e_k (T)$ are monotonic decreasing as $k$ increases, with $e_1 (T) \leq \|T\|.$ 
Moreover, $T$ is compact if and only if $\displaystyle \lim_{k \rightarrow \infty} e_k(T) = 0.$
If $Y$ is a $\gamma$-Banach space, then, for each $k_1,k_2 \in \N,$
$$e_{k_1+k_2-1}^\gamma (T_1+T_2) \leq e_{k_1}^\gamma (T_1) + e_{k_2}^\gamma (T_2) ~\text{and}~ e_{k_1+k_2-1} (RS) \leq e_{k_1}(R) e_{k_2}(S),$$
whenever $T_1+T_2$ and $RS$ are properly defined operators (see \cite{E&T}, p.7).
The  $k^{\text{th}}$ (dyadic) {\it inner entropy number} $f_k(T)$ of $T \in \mathcal{B}(X,Y)$ is defined to be the supremum of all those $\varepsilon > 0$ such that there are $x_1,...,x_{2^{k-1}+1} \in B_X$ with $\|Tx_i - Tx_j | Y\| \geq 2 \varepsilon$ whenever $i,j$ are distinct elements of $\{1,...,2^{k-1}+1\}.$
If $Y$ is a $\gamma$-Banach space, then the outer and inner entropy numbers are related by
 	  		\begin{equation} \label{EqRelationInner&Outer}
 	  			2 ^{1-1/\gamma}f_k(T) \leq e_k(T) \leq 2f_k(T).
 	  		\end{equation}
These estimates were proved by Pietsch (see \cite{P0}, p.169) in the Banach space case ($\gamma = 1$); a simple modification gives us (\ref{EqRelationInner&Outer}). 
 Throughout the paper the phrase \lq \lq entropy numbers\rq \rq ~always means outer entropy numbers.

   
   If $X$ is an $n$-dimensional Banach space, it is well-known that, for any $k \in \N,$ 
   $2^{\frac{1-k}{n}} \leq e_k(I) \leq 4\cdot 2^{\frac{1-k}{n}},$ where $I:X \rightarrow X$ is the identity map (see \cite{C&S}). 
   A simple modification gives us the following analogous result :
   \begin{thm}  \label{Entorpy of finite dimenal identity map}
   	Let $k \in \N$ and $X$ an $n$-dimensional $\gamma$-Banach space. If $I:X \rightarrow X$ is the identity map, then 
   	$$2^{\frac{1-k}{n}} \leq e_k(I:X \rightarrow X) \leq 4^{1/\gamma}\cdot 2^{\frac{1-k}{n}}.$$
   \end{thm}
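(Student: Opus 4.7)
The plan is to adapt the classical volume argument (Carl--Stephani) for the identity on a finite-dimensional Banach space, making the obvious modifications forced by replacing the triangle inequality by the $\gamma$-inequality $\|x+y|X\|^\gamma\leq\|x|X\|^\gamma+\|y|X\|^\gamma$. I treat the two bounds separately.

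For the lower bound the usual volume argument goes through verbatim. Identifying $X$ with $\R^n$ so that Lebesgue measure $\vol$ is well defined and translation invariant, if $B_X\subseteq\bigcup_{i=1}^{2^{k-1}}(y_i+\varepsilon B_X)$ then
$$\vol(B_X)\leq 2^{k-1}\vol(\varepsilon B_X)=2^{k-1}\varepsilon^n\vol(B_X),$$
and since $0<\vol(B_X)<\infty$ one gets $\varepsilon\geq 2^{(1-k)/n}$. Taking the infimum over admissible $\varepsilon$ yields $e_k(I)\geq 2^{(1-k)/n}$.

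For the upper bound I would pick a maximal $\varepsilon$-separated set $\{x_1,\dots,x_N\}\subseteq B_X$; by maximality the balls $x_i+\varepsilon B_X$ cover $B_X$, so it suffices to show $N\leq 2^{k-1}$ for a suitable $\varepsilon$. The only place where $\gamma$ enters is the disjointness/enclosing step: if $(x_i+rB_X)\cap(x_j+rB_X)\neq\emptyset$ the $\gamma$-inequality forces $\|x_i-x_j|X\|^\gamma\leq 2r^\gamma$, so choosing $r=\varepsilon/2^{1/\gamma}$ makes the small balls pairwise disjoint; the same inequality gives $x_i+rB_X\subseteq(1+r^\gamma)^{1/\gamma}B_X$. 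Volume comparison then delivers
$$N\leq\left(\frac{(1+\varepsilon^\gamma/2)^{1/\gamma}}{\varepsilon/2^{1/\gamma}}\right)^n=\left(\frac{(2+\varepsilon^\gamma)^{1/\gamma}}{\varepsilon}\right)^n.$$

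Setting $\varepsilon=4^{1/\gamma}\cdot 2^{(1-k)/n}$, I split on $k$. If $k\geq 1+n/\gamma$, a direct check gives $\varepsilon^\gamma\leq 2$, so the bound collapses to $N\leq(4^{1/\gamma}/\varepsilon)^n=2^{k-1}$, giving $e_k(I)\leq\varepsilon$. If $k<1+n/\gamma$ the proposed $\varepsilon$ already exceeds $2^{1/\gamma}\geq 1=\|I\|$, so the trivial estimate $e_k(I)\leq\|I\|=1\leq\varepsilon$ closes the case. The only real subtlety compared with the Banach situation is tracking how the exponent $1/\gamma$ propagates through the disjointness radius ($\varepsilon/2^{1/\gamma}$ in place of $\varepsilon/2$) and the enclosing ball ($(1+\varepsilon^\gamma/2)^{1/\gamma}$ in place of $1+\varepsilon/2$); these together replace the constant $4$ from the Banach case by $4^{1/\gamma}$, in accordance with the statement.
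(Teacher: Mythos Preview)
Your argument is correct. The paper does not actually give a proof of this theorem; it merely states that ``a simple modification'' of the Banach-space result in \cite{C&S} yields the $\gamma$-Banach version, and your write-up is precisely that modification: the standard volume/packing argument with the triangle inequality replaced by the $\gamma$-inequality, which changes the disjointness radius from $\varepsilon/2$ to $\varepsilon/2^{1/\gamma}$ and the enclosing radius from $1+\varepsilon/2$ to $(1+\varepsilon^\gamma/2)^{1/\gamma}$, turning the constant $4$ into $4^{1/\gamma}$.
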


\section{Sharp estimates of entropy numbers in $\gamma$-Banach spaces}

We begin this section by considering the first inner entropy numbers and proving that the constant $2^{1/\gamma-1}$ in (\ref{Intro-First-Outter-Inner-Entropy-Estimate}) is sharp.
The following elementary theorem gives us the better result. 
More precisely, we show that there exist a quasi-Banach space $X$, a $\gamma$-Banach space $Y$ and a bounded linear map $T:X \rightarrow Y$ such that $0 \neq f_k(T) = 2^{1/\gamma-1 } \|T\|$ for all $k \in \N.$


	 
	  \begin{thm} \label{SharpConstant2}
	  	Let $X$ be a quasi-Banach space, $Y$ a $\gamma$-Banach space and $T \in \mathcal{B}(X,Y)$. 
	  	Then $ f_k(T) \leq 2^{1/\gamma-1}\|T\|$ for all $k \in \N$, and the constant  $2^{1/\gamma-1}$ is sharp.   
	  \end{thm}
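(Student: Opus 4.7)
The plan is to split the argument into the quantitative upper bound and the sharpness example, both of which I expect to be short and direct.

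For the upper bound, I would start from the defining property of $f_k(T)$: for any $\varepsilon<f_k(T)$ there exist points $x_1,\dots,x_{2^{k-1}+1}\in B_X$ with $\|Tx_i-Tx_j\mid Y\|\ge 2\varepsilon$ for $i\ne j$. Using the $\gamma$-norm inequality in $Y$, I can estimate
\begin{equation*}
\|Tx_i-Tx_j\mid Y\|^\gamma \le \|Tx_i\mid Y\|^\gamma + \|Tx_j\mid Y\|^\gamma \le 2\|T\|^\gamma,
\end{equation*}
since $\|x_i\mid X\|,\|x_j\mid X\|\le 1$. Hence $\|Tx_i-Tx_j\mid Y\|\le 2^{1/\gamma}\|T\|$, which yields $2\varepsilon\le 2^{1/\gamma}\|T\|$, and therefore $\varepsilon\le 2^{1/\gamma-1}\|T\|$. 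Taking the supremum over $\varepsilon$ gives $f_k(T)\le 2^{1/\gamma-1}\|T\|$. This is straightforward and I do not expect any subtlety here.

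For the sharpness, I would take $X=Y=\ell_\gamma$ and $T=I$ the identity, so $\|T\|=1$. The canonical unit vectors $(e_i)_{i\in\N}$ lie in $B_{\ell_\gamma}$ and satisfy $\|e_i-e_j\mid \ell_\gamma\| = (1+1)^{1/\gamma}=2^{1/\gamma}$ for all $i\ne j$. Given $k\in\N$, choose $x_i:=e_i$ for $i=1,\dots,2^{k-1}+1$; then all pairwise distances equal $2^{1/\gamma}=2\cdot 2^{1/\gamma-1}$, so $f_k(I)\ge 2^{1/\gamma-1}$. Combined with the upper bound, $f_k(I)=2^{1/\gamma-1}\|I\|$ for every $k$, which exhibits the sharpness and in fact proves the stronger assertion mentioned in the paragraph preceding the theorem.

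The main (and only minor) obstacle is making sure that the upper-bound argument uses only the $\gamma$-norm structure on $Y$ and the quasi-Banach structure on $X$ enters only through $\|x_i\mid X\|\le 1$, which it does. Everything else is a one-line computation, so no serious difficulty is anticipated.
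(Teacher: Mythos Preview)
Your proposal is correct and follows essentially the same approach as the paper: the sharpness example is exactly the identity on $\ell_\gamma$ with the standard unit vectors, yielding $f_k(I)\ge 2^{1/\gamma-1}\|I\|$. The paper actually omits the verification of the upper bound $f_k(T)\le 2^{1/\gamma-1}\|T\|$ as obvious, whereas you spell it out via the $\gamma$-triangle inequality; this addition is correct and harmless.
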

	  
	  \begin{proof}
	  	Let $I:\ell_\gamma \rightarrow \ell_\gamma$ be the identity map.
	  	Since $\|e_i - e_j | \ell_\gamma \| = 2^{1/\gamma} = 2(2^{1/\gamma-1})$ for every distinct standard unit vectors $e_i$ and $e_j$ with $i,j \in \{1,2,...,{2^{k-1}+1}\},$ it follows that
	  	$f_k(I) \geq 2^{1/\gamma-1} = 2^{1/\gamma-1} \|I\|.$
	  	Therefore, the constant  $2^{1/\gamma-1}$ is best possible.  
	  \end{proof}



	Next, the sharpness of the inequality $2^{1-1/\gamma}\|T\| \leq e_1(T)$ is considered.
	Note that for any $a,b \geq 0$ and $\gamma \in (0,1],$ the following elementary inequality holds
	\begin{equation}
		a^\gamma + b^\gamma \geq (a+b)^\gamma.
	\end{equation}
	To show that the constant $2^{1-1/\gamma}$ is best possible, another reasonable $\gamma$-Banach space will be constructed. 
	
 \begin{thm} \label{SharpConstant1}
 	Let $X$ be a quasi-Banach space, $Y$ a $\gamma$-Banach space and $T \in \mathcal{B}(X,Y)$. Then $2^{1-1/\gamma}\|T\| \leq e_1(T) \leq \|T\|$ and the constant  $2^{1-1/\gamma}$ is sharp.
 \end{thm}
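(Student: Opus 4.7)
The plan is to prove the two inequalities separately and then to invoke Theorem~\ref{SharpConstant3} for the sharpness assertion. The upper bound $e_1(T) \leq \|T\|$ is immediate: since $T(B_X) \subseteq \|T\|\,B_Y$, the $Y$-ball of radius $\|T\|$ centred at $0$ covers $T(B_X)$.

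For the lower bound I would fix $r > e_1(T)$ and choose a centre $y_0 \in Y$ such that $T(B_X)$ lies in the closed $Y$-ball of radius $r$ about $y_0$. Because $B_X$ is balanced, both $Tx$ and $-Tx$ lie in this covering ball for every $x \in B_X$, so $\|Tx - y_0 | Y\| \leq r$ and $\|Tx + y_0 | Y\| = \|{-Tx} - y_0 | Y\| \leq r$. Writing $2Tx = (Tx - y_0) + (Tx + y_0)$ and applying the defining $\gamma$-inequality (\ref{Eq-DefofGammaNorm}) in $Y$ gives
\begin{equation*}
2^{\gamma}\|Tx | Y\|^{\gamma} \leq \|Tx - y_0 | Y\|^{\gamma} + \|Tx + y_0 | Y\|^{\gamma} \leq 2r^{\gamma},
\end{equation*}
so $\|Tx | Y\| \leq 2^{1/\gamma - 1}r$. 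Taking the supremum over $x \in B_X$ and letting $r \downarrow e_1(T)$ yields $\|T\| \leq 2^{1/\gamma - 1}\,e_1(T)$, which rearranges to the claimed $e_1(T) \geq 2^{1-1/\gamma}\|T\|$.

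For sharpness I would invoke Theorem~\ref{SharpConstant3}, which provides a Banach space $X_0$, a $\gamma$-Banach space $Y_0$ and a bounded linear $T_0 : X_0 \to Y_0$ with $e_k(T_0) = 2^{1-1/\gamma}\|T_0\|$ for every $k \in \N$; the $k = 1$ case shows the constant cannot be improved. I expect the construction behind Theorem~\ref{SharpConstant3} to be the main obstacle. Natural small-dimensional candidates such as the identity on $\ell_\gamma^n$, or a rank-one map into $\ell_\gamma^n$, all yield $e_1(T) = \|T\|$: when $X_0$ is Banach the image $T(B_{X_0})$ is convex, and symmetry forces the optimal covering centre of any convex balanced subset of $\ell_\gamma^n$ to be $0$, so the Chebyshev radius never drops below $\|T\|$. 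One must therefore design a less standard $\gamma$-Banach target $Y_0$ in which the $\gamma$-triangle inequality can be tight simultaneously along an entire norming segment of the image; this is the real content of the sharpness half of the theorem, and is the \emph{\lq\lq another reasonable $\gamma$-Banach space\rq\rq} alluded to in the paragraph preceding the statement.
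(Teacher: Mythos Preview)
Your proofs of the two inequalities are essentially identical to the paper's: both exploit the symmetry of $B_X$ to get $\pm Tx$ inside a single covering ball and then apply the $\gamma$-triangle inequality to $2Tx$.

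The sharpness part, however, is circular as written. In the paper, Theorem~\ref{SharpConstant3} is proved \emph{after} Theorem~\ref{SharpConstant1}, and its proof explicitly uses the $\gamma$-norm $\omega$ that is first constructed inside the proof of Theorem~\ref{SharpConstant1} (see the reference to~(\ref{DefiOmega}) in the proof of Theorem~\ref{SharpConstant3}). So you cannot appeal to Theorem~\ref{SharpConstant3} here; the concrete example must be supplied directly, and that construction is the actual content of this half of the theorem.

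The paper's construction is simpler than the full Theorem~\ref{SharpConstant3}: on $\R^2$ one defines a $\gamma$-norm $\varphi$ which equals the $\ell_1$-norm on the closed second and fourth quadrants and the $\ell_\gamma$-norm on the open first and third quadrants, checks the $\gamma$-triangle inequality by a short case analysis, and then rotates by $\pi/4$ and rescales to obtain the $\gamma$-norm $\omega$ of~(\ref{DefiOmega}). With $Y_0=(\R^2,\omega)$ and $\widetilde{T}:\R\to Y_0$, $\widetilde{T}(x)=(0,x)$, one has $\|\widetilde{T}\|=2^{1/\gamma-1}$, while the segment $\widetilde{T}([-1,1])$ is contained in the unit ball centred at $(1,0)$, giving $e_1(\widetilde{T})\le 1=2^{1-1/\gamma}\|\widetilde{T}\|$.

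Your instinct that $\ell_\gamma$ targets cannot work is correct and is made precise later in the paper (Theorem~\ref{Prop2-MotivationIneq}): for any $T$ into $\ell_\gamma$ one always has $e_1(T)\ge C(\gamma)\|T\|$ with $C(\gamma)>2^{1-1/\gamma}$. The point of the $\omega$-construction is exactly what you anticipated: to engineer a unit ball whose boundary contains a flat segment (the $\ell_1$ part after rotation) so that a translated ball can swallow the entire image segment at radius strictly below $\|\widetilde{T}\|$.
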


\begin{proof}
		The estimate from above is obvious. 
		On the other hand, we suppose that $T(B_X) \subseteq y+\varepsilon B_Y$ for some $y \in Y$ and $\varepsilon > 0.$ 
		 If $x \in B_X$ then $Tx = y+\varepsilon z_1$ and $T(-x) = y+\varepsilon z_2$ for some $z_1,z_2 \in B_Y,$ and 
		 hence $$2^\gamma \|Tx|Y\|^\gamma = \|Tx - T(-x) |Y\|^\gamma = \varepsilon^\gamma \|z_1 - z_2 |Y\|^\gamma  \leq 2\varepsilon^\gamma.$$
		 This implies that $2^{1-1/\gamma}\|T\| \leq e_1(T).$	
		Next, we will prove that  the constant  $2^{1-1/\gamma}$ is best possible. 
		Let 
		\begin{center}
				\begin{tabular}{p{0.15cm}p{0.05cm}lp{0.15cm}p{0.05cm}l}
					$F_1$ &$=$ &$(-\infty,0] \times [0,\infty),$ &$F_2$ &$=$ &$[0,\infty) \times (-\infty, 0]$ \\
					$G_1$ &$=$ &$(0,\infty) \times (0,\infty),$ &$G_2$ &$=$ &$(-\infty, 0) \times  (-\infty,0).$
				\end{tabular}
		\end{center}
		Define $\varphi : \R^2 \rightarrow [0,\infty)$ by
		$$\varphi (x)= 	\begin{cases}
											\displaystyle |x_1|+|x_2|    &\text{if } x \in F_1 \cup F_2,\\
											\displaystyle \left( |x_1|^\gamma + |x_2|^\gamma \right)^{1/\gamma}      &\text{if } x \in G_1 \cup G_2.
									\end{cases}
		$$
		for all $x = (x_1,x_2) \in \R^2.$
		We will show that $\varphi$ is a $\gamma$-norm.
		Let $x=(x_1,x_2), y=(y_1,y_2) \in \R^2$.
		It is clear that $\varphi(x) = 0$ if and only if $x = 0.$ 
		The homogeneous property follows easily from the properties of absolute value. 
		Next we will investigate the $\gamma$-triangle inequality. 
		The remaining task is to verify that $\varphi^\gamma(x+y) \leq \varphi^\gamma(x) +\varphi^\gamma(y).$
		In the proof, we will distinguish the following cases:
		\begin{enumerate} 
			\item $x+y \in F_1 \cup F_2$
			\item $x+y \in G_1 \cup G_2$
			 \begin{enumerate} 
			 	\item $x,y \in G_1 \cup G_2$
			 	\item $x \in G_1 \cup G_2$ and $y \in F_1 \cup F_2$
			 	\item $x \in F_1 \cup F_2$ and $y \in G_1 \cup G_2$
			 	\item $x,y \in F_1 \cup F_2$
			 \end{enumerate}
		\end{enumerate}		
		{\it Case} 1. $x+y \in F_1 \cup F_2$.
		Since $0<\gamma \leq 1,$ it follows that 
		\begin{align*}
			\varphi^\gamma(x+y) &= (|x_1+y_1|+|x_2+y_2|)^\gamma \\
					&\leq |x_1+x_2|^\gamma+|y_1+y_2|^\gamma \\
					&\leq (|x_1|+|x_2|)^\gamma+(|y_1|+|y_2|)^\gamma \\
					&\leq \varphi^\gamma(x) +\varphi^\gamma(y).
		\end{align*}
		{\it Case} 2. $x+y \in G_1\cup G_2.$ 
		First of all, if $x,y \in G_1\cup G_2,$ we are done because the provided norm on $G_1\cup G_2$ is exactly the $\ell_{\gamma}^2$-norm.
		Next, let us assume that $x \in G_1\cup G_2$ and $y \in F_1 \cup F_2.$ 
		Due to the symmetry, we may assume that $x \in G_1$ and $y \in F_1.$
		Then $x_1,x_2 > 0, ~y_1 \leq 0$ and $y_2 \geq 0$. This implies that $x+y \in G_1,$ and hence
		$$
			\varphi^\gamma(x+y) = |x_1+y_1|^\gamma + |x_2+y_2|^\gamma
					\leq |x_1|^\gamma + |x_2|^\gamma+|y_2|^\gamma\
					\leq \varphi^\gamma(x) +\varphi^\gamma(y).
		$$
		The case $x \in F_1 \cup F_2$ and $y \in G_1\cup G_2$ can  be proved in the same way. 
		Finally, we deal with the case $x,y \in F_1 \cup F_2.$ 
		Without loss of generality we may assume that $x+y \in G_1$ and $x \in F_1$. Then $y \in F_2,$ so that $x_1,y_2 \leq 0$ and $x_2,y_1 \geq 0$.
		Consequently,
		$$	\varphi^\gamma(x+y) = |x_1+y_1|^\gamma + |x_2+y_2|^\gamma \leq  |y_1|^\gamma + |x_2|^\gamma \leq  \varphi^\gamma(x) +\varphi^\gamma(y).$$
		Therefore, $\varphi$ is a $\gamma$-norm.
		
		Now let $Y_0 := (\R^2, \omega),$ where $\omega : \R^2 \rightarrow [0,\infty)$ defined by
		\begin{equation} \label{DefiOmega}
			\omega(x_1,x_2)= 	\begin{cases}
													\displaystyle |x_1|    &\text{if }  |x_1| > |x_2|,\\
													\displaystyle \left( \left| \frac{x_1+x_2}{2}\right| ^\gamma + \left| \frac{x_1-x_2}{2}\right| ^\gamma \right)^{1/\gamma}      &\text{if } |x_1| \leq |x_2|
											\end{cases}	
		\end{equation} 
		for $(x_1,x_2) \in \R^2.$
		Since $\omega$ is obtained from $\varphi$ by rotating through $\frac{\pi}{4}$ radians and scaling by $\frac{1}{\sqrt{2}},$ we have that $\omega$ is also a $\gamma$-norm. 
							\begin{figure}[h!] 
							    	\includegraphics[scale=0.25]{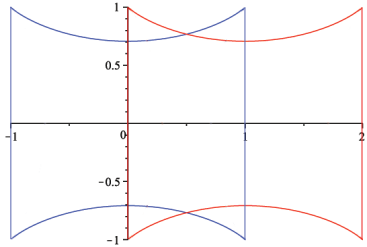}
							    	\caption{$B_{Y_0}$ (left) and $(1,0)+B_{Y_0}$ (right), $\gamma=\frac{2}{3}$.}
							\end{figure}
		
		Define a linear operator $\widetilde{T}: \R \rightarrow Y_0$ by $\widetilde{T}(x)=\left( 0, x\right)$ for all $x \in \R.$ 
		One can see that $\|\widetilde{T}\| = 2^{1/\gamma-1}$. 
			As $\widetilde{T}([-1,1]) = \{(0,x) : -1 \leq x \leq 1\}$ and it can be covered by a unit ball with the center at $(1,0)$ (see Figure 1 for the case $\gamma=\frac{2}{3}$), we have $e_1(\widetilde{T}) \leq 1 = 2^{1-1/\gamma}\|\widetilde{T}\|.$ 
			Therefore, the constant $2^{1-1/\gamma}$ is sharp.
\end{proof}

 The following question might come to readers' mind 
 \lq \lq  Is it possible to obtain the desired sharpness by using only the usual $\ell_{\gamma}$ space?\rq \rq~
 Unfortunately, the next results tell us that the usual $\ell_{\gamma}$ space does not work.

\begin{prop} \label{Prop1-MotivationIneq}
	Let $0<\gamma <1<\beta < \alpha < 2^{1/\gamma -1}$ and  $x=(x_i)_{i=1}^{\infty}, y=(y_i)_{i=1}^{\infty} \in \ell_\gamma$.
	If $\|x|\ell_\gamma\| = 1$ and $\|y|\ell_\gamma\| \leq \beta,$ then  there is a constant $A(\alpha,\beta,\gamma) >0$ such that 
	\begin{equation}  \label{Eq1-MotivationIneq}
		\sum_{i=1}^{\infty} |2^{1/\gamma -1} x_i - y_i|^{\gamma} + |2^{1/\gamma -1} x_i + y_i|^{\gamma} \geq A(\alpha,\beta,\gamma) >2.
	\end{equation}
\end{prop}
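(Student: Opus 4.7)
The strategy is to reduce the claim to a pointwise (one-coordinate) inequality and then sum.  Writing $A := 2^{1/\gamma-1}$, the heart of the argument is the scalar inequality
\begin{equation*}
|1-r|^\gamma + |1+r|^\gamma \;\geq\; 2 - (2-2^\gamma)\,|r|^\gamma, \qquad r \in \R, \qquad (\ast)
\end{equation*}
which, once rescaled by $|A x_i|^\gamma$ with $r = y_i/(A x_i)$, becomes
\begin{equation*}
|A x_i - y_i|^\gamma + |A x_i + y_i|^\gamma \;\geq\; 2\,|A x_i|^\gamma - (2-2^\gamma)\,|y_i|^\gamma.
\end{equation*}
If $x_i = 0$, the right-hand side is non-positive while the left-hand side equals $2|y_i|^\gamma \geq 0$, so the inequality is trivial in that case.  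Summing over $i$ and using $\|x | \ell_\gamma\|^\gamma = 1$, $\|y | \ell_\gamma\|^\gamma \leq \beta^\gamma$, and $A^\gamma = 2^{1-\gamma}$ produces
\begin{equation*}
\sum_{i=1}^{\infty} \bigl(|A x_i - y_i|^\gamma + |A x_i + y_i|^\gamma\bigr) \;\geq\; 2^{2-\gamma} - (2-2^\gamma)\,\beta^\gamma.
\end{equation*}
One then sets $A(\alpha,\beta,\gamma) := 2^{2-\gamma} - (2-2^\gamma)\,\beta^\gamma$ and checks algebraically that $A(\alpha,\beta,\gamma) > 2$ is equivalent to $\beta^\gamma < 2^{1-\gamma}$, i.e.\ $\beta < 2^{1/\gamma-1}$; this is delivered with strict slack by the hypothesis $\beta < \alpha < 2^{1/\gamma-1}$ (the parameter $\alpha$ just certifies that $\beta$ stays bounded away from the critical value).

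All the real work lies in proving $(\ast)$.  By evenness in $r$ I would assume $r \geq 0$ and split at the threshold $r = 1$.  For $r \in [0,1]$, concavity of $t \mapsto t^\gamma$ gives the ``function above chord'' bounds $(1-r)^\gamma \geq 1-r$ (chord from $(0,1)$ to $(1,0)$) and $(1+r)^\gamma \geq 1 + (2^\gamma - 1)r$ (chord from $(0,1)$ to $(1,2^\gamma)$); adding yields $(1-r)^\gamma + (1+r)^\gamma \geq 2 + (2^\gamma - 2)r$, and the elementary fact $r \leq r^\gamma$ on $[0,1]$ rearranges this to $(\ast)$.  For $r \geq 1$, one has $(r+1)^\gamma \geq 2^\gamma$ by monotonicity (since $r+1 \geq 2$) and $(r-1)^\gamma \geq 0$, so $|1-r|^\gamma + |1+r|^\gamma \geq 2^\gamma$; meanwhile $r^\gamma \geq 1$ forces $2-(2-2^\gamma)r^\gamma \leq 2^\gamma$, closing this case as well.

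The main obstacle is precisely the case split in $(\ast)$: both $r = 0$ and $|r| = 1$ saturate the inequality, so no single tool — concavity alone or subadditivity alone — covers both regimes, and the argument must change at $|r|=1$.  Everything after $(\ast)$ is routine bookkeeping: the summation is linear in the coordinates, and the final check $A(\alpha,\beta,\gamma) > 2$ reduces to an elementary manipulation of exponents.
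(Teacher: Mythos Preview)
Your argument is correct, and it follows a genuinely different route from the paper's.  The paper fixes the auxiliary parameter $\alpha$ as a \emph{threshold}: it splits the index set into $I_1=\{i: y_i>\alpha x_i\}$ and $I_2=\{i: y_i\le \alpha x_i\}$, uses subadditivity of $t\mapsto t^\gamma$ on $I_1$ to get $\sum_{I_1}\ge 2\sum_{I_1}|x_i|^\gamma$, and on $I_2$ uses that $t\mapsto (a-t)^\gamma+(a+t)^\gamma$ is decreasing on $[0,a]$ to replace each $y_i$ by $\alpha x_i$.  Combining with $\sum_{I_2}|x_i|^\gamma\ge 1-(\beta/\alpha)^\gamma$ yields a constant
\[
A(\alpha,\beta,\gamma)=2\Bigl(\tfrac{\beta}{\alpha}\Bigr)^{\gamma}+\bigl[(2^{1/\gamma-1}-\alpha)^{\gamma}+(2^{1/\gamma-1}+\alpha)^{\gamma}\bigr]\Bigl(1-\bigl(\tfrac{\beta}{\alpha}\bigr)^{\gamma}\Bigr)
\]
that genuinely depends on $\alpha$.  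By contrast, you prove a single uniform scalar inequality $(\ast)$ valid for all $r$, which after rescaling and summation gives the explicit constant $2^{2-\gamma}-(2-2^{\gamma})\beta^{\gamma}$, depending only on $\beta$ and $\gamma$; in your approach $\alpha$ is entirely superfluous, as you note.  Your method is a bit sharper in that respect and avoids the index-splitting bookkeeping; the paper's approach, on the other hand, makes transparent where the threshold $\alpha$ enters and why some separation between $\beta$ and $2^{1/\gamma-1}$ is needed.
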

\begin{proof}
	Without loss of generality, we assume that $x_i,y_i \geq 0$ for all $i \in \N.$
	Let $I_1 := \{ i \in \N : y_i > \alpha x_i \}$ and $I_2 := \{ i \in \N : y_i \leq \alpha x_i \}.$
	Since $0<\gamma <1,$ we have that
	\begin{equation} \label{Eq2-MotivationIneq}
		\sum_{i \in I_1} |2^{1/\gamma -1} x_i - y_i|^{\gamma} + |2^{1/\gamma -1} x_i + y_i|^{\gamma}
			\geq 	\sum_{i \in I_1} |2^{1/\gamma -1} x_i +2^{1/\gamma -1} x_i|^{\gamma}
			= 2 	\sum_{i \in I_1} |x_i|^{\gamma}.
	\end{equation}
	For any fixed $a >0,$ let us consider a function $g_a:[-a,a] \rightarrow \R$ defined by 
	$$g_a(t) = (a-t)^{\gamma} + (a+t)^{\gamma}$$
	for any $t \in [-a,a].$
	Then $g_a^{\prime}(t) = \gamma \left[ (a+t)^{\gamma -1} - (a-t)^{\gamma - 1} \right] $ and hence $g_a^{\prime}(t) \leq 0$ for any $t \geq 0.$
	Moreover, $\displaystyle \min_{t \in [-a,a]} g_a(t) = g_a(0)$ because $g_a$ is an even function. 
	This implies that $g_a$ is decreasing on $[0,a].$
	Using this remark, we obtain that
	\begin{align}  \label{Eq3-MotivationIneq}
			\sum_{i \in I_2} |2^{1/\gamma -1} x_i - y_i|^{\gamma} + |2^{1/\gamma -1} x_i + y_i|^{\gamma}
				&\geq 	\sum_{i \in I_2} |2^{1/\gamma -1} x_i - \alpha x_i|^{\gamma} + |2^{1/\gamma -1} x_i + \alpha x_i|^{\gamma} \notag \\
				&=  \left[  (2^{1/\gamma -1} - \alpha)^\gamma + (2^{1/\gamma -1} + \alpha)^\gamma \right]   \sum_{i \in I_2} |x_i|^{\gamma}
	\end{align}	
	 Since $\sum_{i \in I_1} |x_i|^\gamma + \sum_{i \in I_2} |x_i|^\gamma =1$ and 
	$\beta^\gamma \geq \sum_{i \in I_1} |y_i|^\gamma \geq \alpha^\gamma\sum_{i \in I_1} |x_i|^\gamma,$ we have
	\begin{equation} \label{Eq4-MotivationIneq}
		\sum_{i \in I_2} |x_i|^\gamma \geq 1-\left( \frac{\beta}{\alpha}\right) ^{\gamma} >0.
	\end{equation}
	We note also that 
	\begin{equation} \label{Eq5-MotivationIneq}
		(2^{1/\gamma -1} - \alpha)^\gamma + (2^{1/\gamma -1} + \alpha)^\gamma >2.
	\end{equation}
	 It follows from (\ref{Eq2-MotivationIneq}), (\ref{Eq3-MotivationIneq}), (\ref{Eq4-MotivationIneq}) and (\ref{Eq5-MotivationIneq})  that
	\begin{equation*} 
		\sum_{i=1}^{\infty} |2^{1/\gamma -1} x_i - y_i|^{\gamma} + |2^{1/\gamma -1} x_i + y_i|^{\gamma} 
			\geq A(\gamma, \beta, \alpha)   >2,
	\end{equation*}	
	where $A(\alpha,\beta,\gamma) :=2\left( \frac{\beta}{\alpha}\right) ^{\gamma} +[(2^{1/\gamma -1} - \alpha)^\gamma + (2^{1/\gamma -1} + \alpha)^\gamma]\left(1- \left( \frac{\beta}{\alpha}\right) ^{\gamma}\right).$
\end{proof}
The next result is a direct consequence of the above proposition.
\begin{cor} \label{Corollary-MotivationIneq}
		Let  $0<\gamma <1<\beta < \alpha < 2^{1/\gamma -1}$ and  $x,y \in \ell_\gamma$.
		If $\|x|\ell_\gamma\| = 1$ and $\{ -2^{1/\gamma -1}x, 0, 2^{1/\gamma -1}x \}$ $\subseteq y + \varepsilon B_{\ell_{\gamma}}$ for some $\varepsilon >0,$  then $\varepsilon \geq  B(\alpha,\beta,\gamma) >1,$
		where $B(\alpha,\beta,\gamma) := \min \left(  \beta, \left( \frac{A(\alpha,\beta,\gamma) }{2} \right)^{1/\gamma}  \right)$ and $A(\alpha,\beta,\gamma)$ is the constant defined as above.  
\end{cor}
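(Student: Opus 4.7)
The plan is to translate the three membership assertions into norm inequalities on $\ell_\gamma$ and then split on the size of $\|y | \ell_\gamma\|$ to invoke Proposition \ref{Prop1-MotivationIneq} in the small-norm case. First I would observe that $0 \in y+\varepsilon B_{\ell_\gamma}$ forces $\|y | \ell_\gamma\| \leq \varepsilon$, while the remaining two inclusions, after writing $-2^{1/\gamma-1}x = y+\varepsilon z$ with $z \in B_{\ell_\gamma}$ and using $\|-u | \ell_\gamma\| = \|u | \ell_\gamma\|$, become
\begin{equation*}
\|2^{1/\gamma-1}x - y | \ell_\gamma\|^\gamma \leq \varepsilon^\gamma \quad \text{and} \quad \|2^{1/\gamma-1}x + y | \ell_\gamma\|^\gamma \leq \varepsilon^\gamma.
\end{equation*}

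Next I would case split on $\|y | \ell_\gamma\|$. If $\|y | \ell_\gamma\| > \beta$, then $\varepsilon \geq \|y | \ell_\gamma\| > \beta \geq B(\alpha,\beta,\gamma)$ and we are done. Otherwise $\|y | \ell_\gamma\| \leq \beta$, so the hypotheses of Proposition \ref{Prop1-MotivationIneq} are in force. Summing the two inequalities above and recognising the left-hand side as the quantity bounded below in the proposition, I obtain
\begin{equation*}
2\varepsilon^\gamma \geq \sum_{i=1}^{\infty} |2^{1/\gamma-1}x_i - y_i|^\gamma + |2^{1/\gamma-1}x_i + y_i|^\gamma \geq A(\alpha,\beta,\gamma),
\end{equation*}
whence $\varepsilon \geq (A(\alpha,\beta,\gamma)/2)^{1/\gamma} \geq B(\alpha,\beta,\gamma)$.

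Combining the two cases yields $\varepsilon \geq B(\alpha,\beta,\gamma)$, and the strict inequality $B(\alpha,\beta,\gamma) > 1$ follows at once from the assumption $\beta > 1$ together with $A(\alpha,\beta,\gamma) > 2$ supplied by Proposition \ref{Prop1-MotivationIneq}. There is no genuine obstacle beyond Proposition \ref{Prop1-MotivationIneq} itself; the only steps requiring attention are the case split that decides whether $\|y | \ell_\gamma\|$ lies in the range covered by the proposition, and the harmless symmetry $\|-u | \ell_\gamma\| = \|u | \ell_\gamma\|$ that converts the membership of $-2^{1/\gamma-1}x$ into a bound on $\|2^{1/\gamma-1}x + y | \ell_\gamma\|$.
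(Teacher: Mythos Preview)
Your proof is correct and follows essentially the same approach as the paper: case-split on whether $\|y|\ell_\gamma\|$ exceeds $\beta$, using the inclusion $0\in y+\varepsilon B_{\ell_\gamma}$ in the first case and Proposition~\ref{Prop1-MotivationIneq} in the second. The paper's argument is slightly terser but structurally identical.
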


\begin{proof}
	Assume that $\|x|\ell_\gamma\| = 1$ and $\{ -2^{1/\gamma -1}x, 0, 2^{1/\gamma -1}x \} \subseteq y + \varepsilon B_{\ell_{\gamma}}$ for some $\varepsilon >0.$
	If $\|y|\ell_\gamma\| > \beta,$ then $\varepsilon > \beta$ because $0 \in y + \varepsilon B_{\ell_{\gamma}}.$
	On the other hand, if $\|y|\ell_\gamma\| \leq \beta,$ Proposition \ref{Prop1-MotivationIneq} implies that 
	$$2\varepsilon^\gamma \geq \sum_{i=1}^{\infty} |2^{1/\gamma -1} x_i - y_i|^{\gamma} + |2^{1/\gamma -1} x_i + y_i|^{\gamma} \geq  A(\alpha,\beta,\gamma) >2,$$
	where $A(\alpha,\beta,\gamma)$ is the constant defined in the proof of Proposition \ref{Prop1-MotivationIneq}.
	The result follows.
\end{proof}

Now we are going to show that it is impossible to obtain the sharpness of the inequality $2^{1-1/\gamma}\|T\| \leq e_1(T)$ by considering only $\ell_{\gamma}$-spaces.

\begin{thm} \label{Prop2-MotivationIneq}
	Let  $0<\gamma <1<\beta < \alpha < 2^{1/\gamma -1}$  and let $X$ be any quasi-Banach space.
	 Then there is a constant $C(\alpha,\beta,\gamma)  > 2^{1-1/\gamma}$ such that for any bounded linear operator  $T:X \rightarrow \ell_\gamma$ the following inequality holds   
	$$e_1(T) \geq C(\alpha,\beta,\gamma)  \|T\|.$$
\end{thm}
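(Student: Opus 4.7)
The plan is to reduce the statement directly to Corollary \ref{Corollary-MotivationIneq}. The guiding observation is that the lower bound argument in Theorem \ref{SharpConstant1} uses only the fact that $B_X$ is balanced and contains the origin, so that $\{-Tx,\,0,\,Tx\} \subseteq T(B_X)$ for every $x \in B_X$. This is exactly the three-point configuration that the Corollary prevents from sitting inside a small ball of $\ell_\gamma$.

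After a homogeneity reduction to $\|T\| = 1$ (the case $\|T\| = 0$ is vacuous), I would fix any $\varepsilon' > e_1(T)$ and choose $y \in \ell_\gamma$ with $T(B_X) \subseteq y + \varepsilon' B_{\ell_\gamma}$. Then I would pick a sequence $x_n \in B_X$ with $r_n := \|Tx_n\,|\,\ell_\gamma\| \to 1$ and set $u_n := Tx_n/r_n$, so that $\|u_n\,|\,\ell_\gamma\| = 1$. Since $\pm x_n$ and $0$ all lie in $B_X$,
\begin{equation*}
\{-r_n u_n,\ 0,\ r_n u_n\} \subseteq y + \varepsilon' B_{\ell_\gamma}.
\end{equation*}
Multiplying this inclusion by $2^{1/\gamma-1}/r_n$ gives
\begin{equation*}
\{-2^{1/\gamma-1} u_n,\ 0,\ 2^{1/\gamma-1} u_n\} \subseteq \tfrac{2^{1/\gamma-1}}{r_n}\, y + \tfrac{2^{1/\gamma-1}}{r_n}\,\varepsilon'\, B_{\ell_\gamma},
\end{equation*}
so Corollary \ref{Corollary-MotivationIneq} applies to $u_n$ and forces $(2^{1/\gamma-1}/r_n)\,\varepsilon' \geq B(\alpha,\beta,\gamma)$, i.e. $\varepsilon' \geq r_n\cdot 2^{1-1/\gamma}\,B(\alpha,\beta,\gamma)$. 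Letting $n \to \infty$ and then $\varepsilon' \to e_1(T)$, and undoing the normalisation, yields
\begin{equation*}
e_1(T) \geq 2^{1-1/\gamma}\, B(\alpha,\beta,\gamma)\, \|T\|.
\end{equation*}
The constant $C(\alpha,\beta,\gamma) := 2^{1-1/\gamma}\, B(\alpha,\beta,\gamma)$ is strictly greater than $2^{1-1/\gamma}$ since $B(\alpha,\beta,\gamma) > 1$.

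The bulk of the work has already been absorbed into Proposition \ref{Prop1-MotivationIneq} and Corollary \ref{Corollary-MotivationIneq}, so this final step is essentially a rescaling followed by passing to the supremum. The only minor obstacle is the double limit in $n$ and $\varepsilon'$, both of which are monotone; and one should record that $B(\alpha,\beta,\gamma)$ in the Corollary depends only on $\alpha,\beta,\gamma$, so that the bound obtained is uniform in the auxiliary vectors $u_n$ and in the operator $T$.
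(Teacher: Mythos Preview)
Your proof is correct and follows essentially the same route as the paper: both reduce to the three-point configuration $\{-2^{1/\gamma-1}u,\,0,\,2^{1/\gamma-1}u\}$ with $\|u\,|\,\ell_\gamma\|=1$ and invoke Corollary~\ref{Corollary-MotivationIneq}, arriving at the identical constant $C(\alpha,\beta,\gamma)=2^{1-1/\gamma}B(\alpha,\beta,\gamma)$. The only cosmetic difference is that the paper picks a single near-extremal $z$ with $\|Tz\,|\,\ell_\gamma\|\geq\|T\|-\delta$ and scales the operator, whereas you normalise $\|T\|=1$, use a sequence $x_n$, and scale the inclusion directly.
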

\begin{proof}
The result is obvious if $T$ is a zero operator. We suppose that $T \neq 0.$
 Let us fix $\delta>0.$ 
 Then there exists $z \in X$ such that $\|z | X\|=1$ and 
 \begin{equation}
 	\|T(z) | \ell_\gamma\| \geq \|T\| - \delta.
 \end{equation}
 Define $S :\R \rightarrow \ell_\gamma$ by $\displaystyle S(\xi) =  \xi T(z) $ for all $\xi \in \R.$
 Then $S ([-1,1]) \subseteq T(B_X)$ and $\|S\| = \|T(z) |\ell_\gamma\| \leq \|T\|.$
 Let $\rho >0$ be such that $\displaystyle \rho > e_1\left(  \frac{2^{1/\gamma -1}T}{\|T(z) |\ell_\gamma\|} \right) .$
 Then there exists $y \in \ell_\gamma$ such that 
 \begin{equation}
 	\frac{2^{1/\gamma -1}}{\|T(z) |\ell_\gamma\|}T(B_X) \subseteq y + \rho B_{\ell_\gamma}.
 \end{equation}
 We note that 
 $$\displaystyle \left\lbrace - \frac{2^{1/\gamma -1}}{\|T(z) |\ell_\gamma\|}T(z), 0, \frac{2^{1/\gamma -1}}{\|T(z) |\ell_\gamma\|} T(z)\right\rbrace  \subseteq \frac{2^{1/\gamma -1}T}{\|T(z) |\ell_\gamma\|} S([-1,1]) \subseteq \frac{2^{1/\gamma -1}T}{\|T(z) |\ell_\gamma\|}T(B_X).$$ 
 By Corollary \ref{Corollary-MotivationIneq}, we have $\rho \geq B(\alpha,\beta,\gamma) >1,$ where $B(\alpha,\beta,\gamma)$ is the constant defined as above.
 Letting $\rho \rightarrow e_1\left(  \frac{2^{1/\gamma -1}T}{\|T(z) |\ell_\gamma\|} \right) ,$ we obtain that $e_1\left(  \frac{2^{1/\gamma -1}T}{\|T(z) |\ell_\gamma\|} \right)  \geq B(\alpha,\beta,\gamma) .$
 This implies that 
 $$e_1(T) \geq  2^{1-1/\gamma}B(\alpha,\beta,\gamma)  \|T(z) |\ell_\gamma\| \geq 2^{1-1/\gamma}B(\alpha,\beta,\gamma)   (\|T\| - \delta).$$
 Now let $\delta \rightarrow 0,$ we finally get $e_1(T) \geq C(\alpha,\beta,\gamma)  \|T\|,$ where $C(\alpha,\beta,\gamma)  := 2^{1-1/\gamma} B(\alpha,\beta,\gamma).$
\end{proof}


Next, we deal with the question motivated by Theorem \ref{SharpConstant1} that \lq \lq Is it possible to find a Banach space $X$, a $\gamma$-Banach space $Y$ and $T \in \mathcal{B}(X,Y)$ such that $2^{1-1/\gamma}\|T\| = e_k(T)$ for all $k \in \N ?$\rq \rq~
We note that when $Y$ is a Banach space $(\gamma =1)$ a positive answer was provided by Hencl (see \cite{Hencl2003}). 
In Theorem \ref{SharpConstant3}, we also give the positive answer when $Y$ is a $\gamma$-Banach space.
 To prove this result, we need a few auxiliary results. 
First, we introduce a special type of basis which is modified for $\gamma$-Banach spaces 
; it is slightly different from the definition of unconditional bases which appears in the book of Lindenstrauss and Tzafriri (see \cite{Lind&Tzaf}). 

 \begin{defn}
 	Let $E$ be an $n$-dimensional $\gamma$-Banach space.
 	A basis $\{w_1,...,w_n\}$ for $E$ is called {\it $1$-unconditional} if for every $\alpha_1,\alpha_2,...,\alpha_n \in \R$  the following property holds:
 	$$\left\| \sum_{i=1}^{n}  \alpha_i w_i | E \right\|  = \left\| \sum_{i=1}^{n} |\alpha_i| w_i | E\right\|.$$ 
 
 \end{defn}
 
 
 
Let $E$ be an $n$-dimensional $\gamma$-Banach space and $\{w_1,...,w_n\}$ a $1$-unconditional basis for $E$. 
 Let $X_1,X_2,...,X_n$ be any Banach spaces. Define $\tau : \prod_{i=1}^{n} X_i \rightarrow [0,\infty)$ by
 \begin{equation} \label{DefOfPhi}
 	\tau (x) := \left\| \left(  \|x_1|X_1\|, \|x_2|X_2\|,...,\|x_n|X_n\| \right) |E \right\| 
 \end{equation}
 for $x = (x_1,x_2,...,x_n) \in \prod_{i=1}^{n} X_i .$
 We note that if $E$ is an $n$-dimensional Banach space, then the map $\tau$ defined as above is a norm. 
 However, if $E$ is an $n$-dimensional $\gamma$-Banach space, an additional restriction on the space $E$ is required to ensure that $\tau$ is a $\gamma$-norm.
	
	
 	Let
 	 $u=(u_i)_{i=1}^n,$ $v=(v_i)_{i=1}^n \in [0,\infty)^n.$ 
 	For each $i \in \{1,2,...,n\},$ let $m_i := \min(u_i,v_i)$ and $M_i := \max(u_i,v_i).$ 
 	Set \begin{equation}
 			Q_{u,v} := \prod_{i=1}^{n} [M_i-m_i, M_i+m_i].
 		\end{equation}
 		
 \begin{defn}
 	Let $E$ be an $n$-dimensional $\gamma$-Banach space with a $1$-unconditional basis.
 	We say that $E$ satisfies the {\it condition $(Q_\gamma)$} if for any $u,v \in [0,\infty)^n$, the following estimate holds:
 	\begin{equation} \label{ConditionQ}
 		\sup_{x \in Q_{u,v}} \|x|E\|^\gamma \leq \|u|E\|^\gamma +\|v|E\|^\gamma.
 	\end{equation}
 \end{defn}

 \begin{prop} \label{SatisfyCondQImplyPNorm}
 	Let $E$ be an $n$-dimensional $\gamma$-Banach space with a $1$-unconditinal basis.	
 	If $E$ satisfies the condition $(Q_\gamma)$, then the map $\tau$ defined in (\ref{DefOfPhi})  is a $\gamma$-norm.
 \end{prop}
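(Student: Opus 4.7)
\medskip
\noindent\textbf{Proof plan.} The plan is to check the three axioms of a $\gamma$-norm one by one, with all the work concentrated in the $\gamma$-triangle inequality. Positive definiteness and absolute homogeneity follow immediately: if $\tau(x)=0$, then since the $E$-norm is non-degenerate and the basis is $1$-unconditional, every coordinate $\|x_i|X_i\|$ must vanish, forcing $x=0$; and the absolute homogeneity of each component norm together with the homogeneity of $\|\cdot|E\|$ gives $\tau(\lambda x)=|\lambda|\tau(x)$.

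For the $\gamma$-triangle inequality, fix $x=(x_1,\ldots,x_n),\,y=(y_1,\ldots,y_n)\in\prod_i X_i$ and set $u_i:=\|x_i|X_i\|$, $v_i:=\|y_i|X_i\|$, $m_i:=\min(u_i,v_i)$, $M_i:=\max(u_i,v_i)$. The key observation is that because each $X_i$ is a Banach space, the ordinary triangle inequality and its reverse yield
\[
 |u_i-v_i|\ \leq\ \|x_i+y_i|X_i\|\ \leq\ u_i+v_i,
\]
which is precisely the statement that $\|x_i+y_i|X_i\|\in[M_i-m_i,\,M_i+m_i]$. Consequently the nonnegative vector $w:=(\|x_1+y_1|X_1\|,\ldots,\|x_n+y_n|X_n\|)$ belongs to the box $Q_{u,v}$.

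Now apply the hypothesis $(Q_\gamma)$. Since $\{w_1,\ldots,w_n\}$ is $1$-unconditional and $w\in Q_{u,v}$,
\[
 \tau(x+y)^\gamma\ =\ \|w|E\|^\gamma\ \leq\ \sup_{z\in Q_{u,v}}\|z|E\|^\gamma\ \leq\ \|u|E\|^\gamma+\|v|E\|^\gamma\ =\ \tau(x)^\gamma+\tau(y)^\gamma,
\]
which is the desired $\gamma$-triangle inequality.

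The only non-cosmetic step is the second paragraph: one has to recognise that the right way to use $(Q_\gamma)$ is to sandwich $\|x_i+y_i|X_i\|$ between $|u_i-v_i|$ and $u_i+v_i$ via the forward and reverse triangle inequalities in the Banach factors, thereby placing the vector of norms into exactly the box that $(Q_\gamma)$ controls. Once this is spotted, condition $(Q_\gamma)$ does the rest and the $1$-unconditionality of the basis is what allows us to treat $w$ as a genuine element of $E$ whose $E$-norm depends only on its (nonnegative) coordinates. No additional monotonicity of $\|\cdot|E\|$ needs to be assumed or proved, which is fortunate since such monotonicity is not automatic for $\gamma$-Banach spaces with $\gamma<1$.
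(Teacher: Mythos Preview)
Your proof is correct and follows essentially the same route as the paper's: identify $u=(\|x_i|X_i\|)_i$, $v=(\|y_i|X_i\|)_i$, use the forward and reverse triangle inequalities in each Banach factor $X_i$ to place the vector $(\|x_i+y_i|X_i\|)_i$ inside the box $Q_{u,v}$, and then invoke condition $(Q_\gamma)$. The paper omits the verification of positive definiteness and homogeneity as routine, but otherwise the arguments coincide.
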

 \begin{proof}
 	It suffices to show that the $\gamma$-triangle inequality holds. 
 	Let $x=(x_i)_{i=1}^n, y = (y_i)_{i=1}^n \in \prod_{i=1}^{n} X_i.$
 	Put $ u=\left(  \|x_i|X_i\| \right)_{i=1}^{n}$ and $v=\left(  \|y_i|X_i\| \right)_{i=1}^{n}.$ 
 	For each $i = 1,2,...,n,$ since $X_i$ is a Banach space,  we have that
 	$$M_i - m_i =\left| \|x_i|X_i\|- \|y_i|X_i\| \right| \leq \|x_i + y_i|X_i\| \leq \|x_i|X_i\| + \|y_i|X_i\| = M_i+m_i.$$
 	Hence,
 	 $z = \left(  \|x_i + y_i|X_i\| \right)_{i=1}^{n} \in Q_{u,v}.$
 	It follows from (\ref{ConditionQ}) that
 	$$\tau^\gamma(x+y) =  \left\|z| E \right\|^\gamma \leq \|u|E\|^\gamma +\|v|E\|^\gamma = \tau^\gamma(x) + \tau^\gamma(y).$$
 \end{proof}
  
 \begin{rem}
 	If there are Banach spaces $X_1,X_2,...,X_n$ with $\dim X_i \geq 2, ~i=1,2,...,n$ so that $\tau$ is a $\gamma$-norm, then $E$ will satisfy the condition $(Q_\gamma)$.
 \end{rem}

 
 Now we are ready to prove our main result.
 \begin{thm} \label{SharpConstant3}
 There exist a Banach space $X$, a $\gamma$-Banach space $Y$ and $T \in \mathcal{B}(X,Y)$ such that $2^{1-1/\gamma}\|T\| = e_k(T)$ for all $k \in \N.$
 \end{thm}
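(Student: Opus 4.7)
The plan is to paste together countably many copies of the $2$-dimensional construction used in Theorem~\ref{SharpConstant1} through an $\ell_\infty$-type sum. Both $\|T\|$ and an explicit covering by one ball can then be read off from the proof of Theorem~\ref{SharpConstant1}, while the lower bound $e_k(T)\ge 2^{1-1/\gamma}\|T\|$ for \emph{all} $k$ will come from a pigeonhole argument on $\pm 1$-sign sequences. Any finite-dimensional example would produce a compact operator with $e_k\to 0$, so an infinite direct sum is unavoidable.

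Concretely, I would let $Y_0 := (\R^2,\omega)$ and $\widetilde{T}\colon \R\to Y_0$, $\widetilde{T}(\xi) = (0,\xi)$, be exactly the objects from the proof of Theorem~\ref{SharpConstant1}; take $X := \ell_\infty$ (a genuine Banach space) and let $Y$ be the $\ell_\infty$-sum of countably many copies of $Y_0$, with $\|(y_n)|Y\| := \sup_n\|y_n|Y_0\|$. Since $t\mapsto t^\gamma$ is monotone, $\|\cdot|Y\|$ inherits the $\gamma$-triangle inequality from $\omega$ and is a complete $\gamma$-norm. The operator $T\colon X\to Y$ defined by $T((x_n)) := ((0,x_n))_n$ then satisfies $\|T\| = 2^{1/\gamma-1}$. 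For the upper bound I would cover $T(B_X)$ by the single ball $y^\ast + B_Y$ where $y^\ast := ((1,0))_n$: a direct inspection of (\ref{DefiOmega}) gives $\omega(-1,t) = 1$ for every $t\in [-1,1]$, so $\|T((x_n)) - y^\ast|Y\| = 1$ for each $(x_n)\in B_X$. This forces $e_1(T)\le 1$, hence $e_k(T)\le 1$ for all $k$.

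The key step is the matching lower bound $e_k(T)\ge 1$ for every $k$, which does \emph{not} follow automatically from Theorem~\ref{SharpConstant1}. Here I would use the $\pm 1$-sign sequences: for any $k$, pick $2^{k-1}+1$ distinct sequences $\varepsilon^{(j)} \in \{-1,1\}^\N\subseteq B_X$. Since $\omega(0,\pm 2) = 2^{1/\gamma}$ (second case of (\ref{DefiOmega})), one has $\|T(\varepsilon^{(j)}) - T(\varepsilon^{(j')})|Y\| = 2^{1/\gamma}$ whenever $j\neq j'$. If $T(B_X)$ could be covered by $2^{k-1}$ balls of radius $r < 1$, pigeonhole would force two of the $T(\varepsilon^{(j)})$ into a common ball, and the $\gamma$-triangle inequality would then yield $2^{1/\gamma}\le (r^\gamma+r^\gamma)^{1/\gamma} = 2^{1/\gamma}r < 2^{1/\gamma}$, a contradiction. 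Combining the two bounds gives $e_k(T) = 1 = 2^{1-1/\gamma}\|T\|$ for every $k\in\N$.

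The main obstacle I expect is satisfying all three compatibility conditions simultaneously: that $\|\cdot|Y\|$ really is a $\gamma$-norm rather than a mere quasi-norm (which dictates the sup structure, since it behaves cleanly under $t\mapsto t^\gamma$), that the single point $((1,0))_n$ witnesses $e_1\le 1$ \emph{uniformly} across all coordinates, and that the $2^{1/\gamma}$-separation of the sign-sequence images survives inside the sup-sum. The machinery surrounding Proposition~\ref{SatisfyCondQImplyPNorm} and the condition $(Q_\gamma)$ could be invoked to justify the $\gamma$-norm property in a more systematic way (the space $E = \ell_\infty$ readily satisfies $(Q_\gamma)$), but a direct verification is just as short.
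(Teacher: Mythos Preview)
Your argument is correct. The $\ell_\infty$-sum of copies of $Y_0$ is indeed a $\gamma$-Banach space by the componentwise computation you indicate, the single ball centred at $((1,0))_n$ covers $T(B_X)$ because $\omega(-1,t)=1$ for all $|t|\le 1$, and the pigeonhole/separation argument on sign sequences gives $e_k(T)\ge 1$ exactly as you wrote.

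Your construction, however, differs from the paper's. The paper does not take an $\ell_\infty$-sum of copies of $Y_0$; instead it builds a single ``twisted'' space $Y=(\R\times X,\vartheta)$ with $X=\ell_p$ and $\vartheta(\xi,x)=\omega(|\xi|,\|x|X\|)$, using the $(Q_\gamma)$ machinery (Proposition~\ref{SatisfyCondQImplyPNorm}) to certify that $\vartheta$ is a $\gamma$-norm. For the lower bound the paper factors the identity on $X$ through $T$ via the projection $P:Y\to X$ and invokes $e_k(I_X)=1$, rather than arguing by separation of sign sequences. Your route is more elementary and self-contained: the $\gamma$-triangle inequality for the sup-sum is a one-line check, and the pigeonhole step avoids any appeal to Theorem~\ref{Entorpy of finite dimenal identity map}. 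The paper's route, on the other hand, illustrates the $(Q_\gamma)$ framework in action and yields the lower bound conceptually via $e_k(PT)\le\|P\|e_k(T)$, which may generalise more readily when the fibre $Y_0$ is replaced by something less explicit.
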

 
 \begin{proof} 
 Let $E_\gamma := (\R^2, \omega),$ where $\omega$ is the $\gamma$-norm defined as (\ref{DefiOmega}).
 First, we will show that $E_\gamma$ satisfies the condition $(Q_\gamma)$.
 Let $u = (u_1,u_2), v = (v_1,v_2) \in [0,\infty)^2.$
 Fix $a,b \in [0,\infty).$ 
 Let $\alpha:[0,\infty) \rightarrow [0,\infty)$ be a function defined by $\alpha(t) = \omega((a,b+t))$ and let $\beta:[0,\infty) \rightarrow [0,\infty)$ be a function defined by $\beta(t) = \omega((a+t,b)).$ 
 Notice that $\alpha$ is an increasing function. In addition, $\beta$ decreases on an interval and then goes up linearly; in other words, if $a \geq b$ then $\beta$ is increasing; on the other hand, if $a < b$ then $\beta$ decreases on $[a,b]$ and increases on $[b,\infty).$ 
 Thus, the following estimate holds: 
 \begin{equation} \label{Eq1-SharpConstant3}
 \sup_{x \in Q_{u,v}} \omega^\gamma(x) \leq \max(\omega^\gamma((M_1+m_1,M_2+m_2)) , \omega^\gamma((M_1-m_1,M_2+m_2)) )
 \end{equation}
 It is clear that $\omega^\gamma((M_1+m_1,M_2+m_2)) = \omega^\gamma(u+v) \leq \omega^\gamma(u) + \omega^\gamma(v).$
 Next, we will estimate $\omega^\gamma((M_1-m_1,M_2+m_2)).$ 
 If $M_1 = u_1,$ we put $\tilde{u} = u$ and $\tilde{v} = (-v_1,v_2)$. 
 Then, 
 \begin{equation*}
 \omega^\gamma((M_1-m_1,M_2+m_2)) 
 	= \omega^\gamma(\tilde{u}+\tilde{v}) 
 	\leq  \omega^\gamma(\tilde{u})+ \omega^\gamma(\tilde{v}) 
 	= \omega^\gamma(u)+ \omega^\gamma(v).
 \end{equation*}
 Similarly, if $M_1 = v_1,$ we have $\omega^\gamma((M_1-m_1,M_2+m_2)) \leq \omega^\gamma(u)+ \omega^\gamma(v).$
 These estimates and (\ref{Eq1-SharpConstant3}) imply that $E_\gamma$ satisfies the condition $(Q_\gamma)$.

Now, let $X:=\ell_p$ for some $1\leq p\leq \infty.$ 
	It is clear that $e_k(I:X \rightarrow X) \leq  1$ for all $k \in \N.$
   	 	Suppose that there are $k_0 \in \N$ and $\alpha > 0$ such that $e_{k_0}(I:X \rightarrow X) < \alpha < 1.$
   	 	Let $n_0 \in \N$ be such that $\alpha < 2^{\frac{1-k_0}{n_0}}.$
   	 		Theorem \ref{Entorpy of finite dimenal identity map} implies that  
   	 	 \begin{equation}
   	 	  2^{\frac{1-k_0}{n_0}} \leq e_k(I: \ell_p^{n_0} \rightarrow \ell_p^{n_0}) \leq \|P\| e_k(I:X \rightarrow X) \|J\| = e_k(I:X\rightarrow X) < \alpha ,
   	 	  \end{equation}
   	 	where $J: \ell_p^{n_0}  \rightarrow X$ and $P:X \rightarrow \ell_p^{n_0}$ are defined by
   		$J(x_i)_{i=1}^{n_0} = (x_1,...,x_{n_0},0,0...)$ and $P(x_i)_{i=1}^{\infty} = (x_1,...,x_{n_0}).$   	 	
   	 	This is a contradiction. Hence, $e_k(I:X \rightarrow X) = 1$ for all $k \in \N.$  
Next, define $\vartheta: \R \times X \rightarrow [0,\infty)$  by
\begin{equation} \label{Eq-GammaNorm}
	 \vartheta(\xi,x) = \omega((|\xi|, \|x|X\|))
\end{equation}
 for $(\xi,x) \in \R \times X.$
 By Proposition \ref{SatisfyCondQImplyPNorm}, $\vartheta$ is a $\gamma$-norm.
Let $Y:= (\R \times X, \vartheta)$ and define $T:X \rightarrow Y$ by $T(x) = (0,x)$ for all $x \in X.$ 
 For each $x \in B_X,$ we have $\vartheta(Tx) = \omega(0, \|x|X\|) = 2^{1/\gamma-1}\|x|X\|,$ which implies that $\|T\| = 2^{1/\gamma-1}.$ 
  Let $P$ be the projection of $Y$ onto $X.$ 
Then,
 	\begin{equation} \label{Eq1-NoteThm3.1.10}
 		1 = e_k(I) = e_k(PT) \leq \|P\|e_k(T) = e_k(T). 
 	\end{equation}	
 	On the other hand, since $T(B_X) \subseteq (1,0) + B_{Y},$ it turns out that
 		$e_k(T) \leq e_1(T) \leq 1.$ 
 	Combining with (\ref{Eq1-NoteThm3.1.10}), we obtain that  
 	$e_k(T) = 1 = 2^{1-1/\gamma} \|T\|.$ 
 \end{proof}

    Next we consider the metric  injection and metric surjection properties of entropy numbers.

      
     \begin{defn}
     	Let $\widetilde{X}$ and $X$ be quasi-Banach spaces.
     	A continuous linear map $\varsigma$ from $\widetilde{X}$ onto $X$ is called a {\it metric surjection} if the image of the unit ball in $\widetilde{X}$ under $\varsigma$ is the unit ball in $X$. 
     \end{defn}
     
     If $\widetilde{X},X$ and $Y$ are Banach spaces, it is known that $e_{k}(T\varsigma) = e_{k}(T)$ and  $f_{k}(T\varsigma) = f_{k}(T)$ (see \cite{C&S}, p.12-13). A careful observation gives us the following similar result. 
       
     \begin{prop} \label{SurjectivityOfe}
     	Let $\widetilde{X},X$ be quasi-Banach spaces and $Y$ a $\gamma$-Banach space.
     	Let $T \in \mathcal{B}(X,Y)$ and $\varsigma:\widetilde{X} \rightarrow X$ be a metric surjection. Then, for each $k \in \N,$ 
     	\begin{center}
     		$e_{k}(T\varsigma) = e_{k}(T)$ and  $f_{k}(T\varsigma) = f_{k}(T).$ 
     	\end{center}
     \end{prop}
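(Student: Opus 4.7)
The plan is to exploit the defining property of a metric surjection, namely $\varsigma(B_{\widetilde{X}}) = B_X$, and observe that both the outer and inner entropy numbers depend only on the image set $T(B_X) \subseteq Y$. So the strategy is to show $(T\varsigma)(B_{\widetilde{X}}) = T(B_X)$ and then translate the quantifiers defining $e_k$ and $f_k$ through this equality. Note that the $\gamma$-Banach structure on $Y$ plays no role beyond giving a meaningful notion of radius and distance; nothing subtle happens in the target.

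First I would record the set equality: since $\varsigma(B_{\widetilde{X}}) = B_X$, applying $T$ gives $(T\varsigma)(B_{\widetilde{X}}) = T(\varsigma(B_{\widetilde{X}})) = T(B_X)$. For the outer entropy numbers, both $e_k(T\varsigma)$ and $e_k(T)$ are, by definition, the infimum of those $\varepsilon>0$ for which this common set can be covered by $2^{k-1}$ balls of radius $\varepsilon$ in $Y$; the infima are therefore equal. This half is essentially a one-line observation.

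For the inner entropy numbers I would argue by two inclusions. Given $x_1,\dots,x_{2^{k-1}+1} \in B_{\widetilde X}$ witnessing $f_k(T\varsigma) > \varepsilon$, set $y_i := \varsigma(x_i) \in B_X$; then $Ty_i = T\varsigma x_i$ and the separation inequality $\|Ty_i - Ty_j \mid Y\| \geq 2\varepsilon$ is immediate, so $f_k(T) \geq f_k(T\varsigma)$. Conversely, if $y_1,\dots,y_{2^{k-1}+1} \in B_X$ witness $f_k(T) > \varepsilon$, then because $\varsigma(B_{\widetilde X}) = B_X$ exactly, each $y_i$ has a preimage $x_i \in B_{\widetilde X}$ with $\varsigma(x_i) = y_i$; the same computation gives $\|T\varsigma x_i - T\varsigma x_j\mid Y\| \geq 2\varepsilon$, hence $f_k(T\varsigma) \geq f_k(T)$.

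There is no real obstacle: the only place to be careful is the second direction of the $f_k$-argument, which uses that the set-theoretic surjection $\varsigma : B_{\widetilde X} \twoheadrightarrow B_X$ is onto, not merely that $\overline{\varsigma(B_{\widetilde X})} = B_X$ or $\varsigma(B_{\widetilde X})$ is dense. Since this exact surjectivity of unit balls is built into the definition of metric surjection used in the paper, the proof is a direct transcription of the Banach-space argument in \cite{C&S}, p.\,12--13, with no additional $\gamma$-dependent adjustment required.
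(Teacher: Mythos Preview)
Your proposal is correct and matches the paper's treatment: the paper does not supply a proof at all, merely remarking that ``a careful observation'' of the Banach-space argument in \cite{C&S}, p.\,12--13, yields the result, and your write-up is precisely that careful observation carried out explicitly. Your note that the exact equality $\varsigma(B_{\widetilde X}) = B_X$ (rather than a closure or density statement) is what makes the $f_k$ direction go through is the only point worth flagging, and you have flagged it.
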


   \begin{defn} \label{Def-Metric-Injection}
   	Let $Y$ and $\widetilde{Y}$ be quasi-Banach spaces.
   	A continuous linear map $\iota$ from $Y$ into $\widetilde{Y}$ is called a {\it metric injection} if $\|y | Y\| = \|\iota y | \widetilde{Y}\|$ for all $y \in Y.$  
   \end{defn}
   
  
  
  
   If $X, Y, \widetilde{Y}$ are Banach spaces, then  $e_{k}(\iota T) \leq e_{k}(T) \leq 2e_k(\iota T)$ and  $f_{k}(\iota T) = f_{k}(T)$ (see \cite{C&S}, p.13). Moreover, the constant $2$ cannot be reduced (see also \cite{C&S}, p.125). 
   We now consider the case that our spaces are quasi-Banach spaces. A similar result is obtained. 
   
  \begin{prop} \label{InjectivityOff}
  	Let $X$ be a quasi-Banach space and let $Y,\widetilde{Y}$ be $\gamma$-Banach spaces.
  	Let $T \in \mathcal{B}(X,Y)$ and $\iota:Y \rightarrow \widetilde{Y}$ be a metric injection. Then for each $k \in \N,$ 
  	\begin{equation} \label{Eq-InjectivityOff}
  		e_{k}(\iota T) \leq e_{k}(T) \leq 2^{1/\gamma}e_k(\iota T) ~\text{and}~ f_k(T) = f_k(\iota T).
  \end{equation}
  \end{prop}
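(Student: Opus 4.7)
The plan is to prove the three claims separately: (i) $e_k(\iota T) \leq e_k(T)$, (ii) $e_k(T) \leq 2^{1/\gamma} e_k(\iota T)$, and (iii) $f_k(T) = f_k(\iota T)$. Item (iii) will be easy, item (i) is a direct push-forward of coverings, and item (ii) is where the $\gamma$-triangle inequality forces the extra factor.

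For (i), I would start with any covering $T(B_X) \subseteq \bigcup_{j=1}^{2^{k-1}} (y_j + \varepsilon B_Y)$ and apply $\iota$. Because $\iota$ is a metric injection, $\|y\|=\|\iota y\|$ gives $\iota(B_Y) \subseteq B_{\widetilde{Y}}$, so $\iota T(B_X) \subseteq \bigcup_j (\iota y_j + \varepsilon B_{\widetilde{Y}})$, and taking the infimum over admissible $\varepsilon$ yields the bound.

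For the main step (ii), I begin with a covering $\iota T(B_X) \subseteq \bigcup_{j=1}^{2^{k-1}} (\tilde y_j + \varepsilon B_{\widetilde{Y}})$ with centres $\tilde y_j \in \widetilde{Y}$ that need not lie in $\iota(Y)$. The idea is to discard any ball that misses $\iota T(B_X)$, and for each remaining ball pick some $x_j \in B_X$ with $\iota T x_j \in \tilde y_j + \varepsilon B_{\widetilde{Y}}$. Then for every $x \in B_X$ whose image falls in the $j$-th ball, the $\gamma$-triangle inequality in $\widetilde{Y}$ gives
\begin{equation*}
\|\iota T x - \iota T x_j\|^{\gamma} \;\leq\; \|\iota T x - \tilde y_j\|^{\gamma} + \|\tilde y_j - \iota T x_j\|^{\gamma} \;\leq\; 2\varepsilon^{\gamma},
\end{equation*}
so $\|\iota T x - \iota T x_j\| \leq 2^{1/\gamma}\varepsilon$, and the metric injection property upgrades this to $\|T x - T x_j\| \leq 2^{1/\gamma}\varepsilon$. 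Hence the new centres $T x_j \in Y$ produce a covering of $T(B_X)$ by at most $2^{k-1}$ balls of radius $2^{1/\gamma}\varepsilon$ in $Y$, giving $e_k(T) \leq 2^{1/\gamma} e_k(\iota T)$.

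For (iii), the identity $\|T x_i - T x_j\| = \|\iota T x_i - \iota T x_j\|$, which holds by the definition of a metric injection, puts the $2^{k-1}+1$-point $2\varepsilon$-separated configurations in $T(B_X)$ and in $\iota T(B_X)$ in exact correspondence, so the suprema defining $f_k(T)$ and $f_k(\iota T)$ coincide. The main subtlety is the constant $2^{1/\gamma}$ in (ii): the $\gamma$-triangle inequality sums $\gamma$-th powers, which is unavoidable here and ultimately accounts for why Example \ref{Ex-constant-cannot-be-reduced} shows that this factor is best possible.
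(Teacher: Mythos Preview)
Your proof is correct. Parts (i) and (iii) match the paper's argument essentially verbatim. For (ii), however, you take a genuinely different route: you re-centre the covering directly, replacing each centre $\tilde y_j\in\widetilde{Y}$ by some $Tx_j\in Y$ and invoking one application of the $\gamma$-triangle inequality to pick up the factor $2^{1/\gamma}$. The paper instead derives (ii) from (iii) via the inner/outer relation \eqref{EqRelationInner&Outer}: having shown $f_k(T)=f_k(\iota T)$, it simply writes
\[
e_k(T)\leq 2f_k(T)=2f_k(\iota T)\leq 2\cdot 2^{1/\gamma-1}e_k(\iota T)=2^{1/\gamma}e_k(\iota T).
\]
Your approach is self-contained and makes the origin of the constant transparent; the paper's is shorter given the preliminaries already in place, and it shows that (ii) is really a formal consequence of (iii) together with \eqref{EqRelationInner&Outer}.
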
 
   \begin{proof}
   	It is clear that $ e_k(\iota T) \leq e_k(T) $ and $f_k(T) \leq f_k(\iota T).$ 
   	Let $0<\rho <f_k(\iota T).$ Then there are $x_1,...,x_{2^{k-1}+1} \in B_X$ such that $\| \iota T x_i - \iota T x_j | \widetilde{Y} \| > 2 \rho$ for $i \neq j.$
   	Since $\| \iota T x_i - \iota T x_j | \widetilde{Y} \| = \| \iota (T x_i - T x_j) | \widetilde{Y} \| = \|  T x_i -  T x_j | Y \|,$ we have $f_k(T) \geq \rho$; therefore, $f_k(T) \geq f_k(\iota T).$ 
   	Finally, it follows from (\ref{EqRelationInner&Outer}) that
   		$e_k(T) \leq 2f_k(T)= 2f_k(\iota T) \leq  2^{1/\gamma}e_k(\iota T).$
   \end{proof}
   

   Recall that $c_0 := \{(x_i)_{i=1}^\infty \in \ell_\infty :  \lim_{i \rightarrow \infty} x_i = 0 \}$ is a subspace of $\ell_\infty.$ 
   The following example which is a modification of the results in \cite{C&S}, p.125, shows that the constant $2^{1/\gamma}$ in (\ref{Eq-InjectivityOff}), in fact, cannot be reduced.
    
   \begin{example}  \label{Ex-constant-cannot-be-reduced}
   		Let $T_0 :\ell_1 \rightarrow (\{0\} \times c_0, \vartheta)$ and $T_{\infty} :\ell_1 \rightarrow (\R \times \ell_\infty, \vartheta)$ be the maps defined by $T_0(x) = (0,x)$ and $T_\infty(x) = (0,x)$ for any $x \in \ell_1$ Note that $\vartheta$ is the $\gamma$-norm defined as (\ref{Eq-GammaNorm}).
   		First, we will show that $e_k(T_0) = 2^{1/\gamma - 1}$ for all $k \in \N.$ 
   		Let $k \in \N$ and $\varepsilon > e_k(T_0).$ 
   		   		Then there are $z^{(1)}, ..., z^{(2^{k-1})} \in c_0$ such that 
   		   		$$T_0(B_{\ell_1}) \subseteq \bigcup_{i=1}^{2^{k-1}} \left\lbrace  (0,z^{(i)}) + \varepsilon B_{(\{0\} \times c_0, \vartheta)} \right\rbrace .$$
   		   			For each $i = 1,2,...,2^{k-1},$  there exists $N_i \in \N$ such that $|z^{(i)}_{j}| \leq \varepsilon - e_k(T_0)$ for all $j \geq N_i.$ Let $N = \max\{ N_i : i=1,2,...,2^{k-1} \}$ and let $e_N$ be a standard unit vector in $\ell_{1}$. 
   		   		   	Thus there is $i_0 \in \{ 1,2,...,2^{k-1} \}$ such that 
   		   		   		$ \varepsilon \geq \vartheta \left( T_0(e_N) - (0,z^{(i_0)})\right) = \omega \left( ( 0,\|e_N-z^{(i_0)} | c_0\|) \right).$
   		   		   	We note that 
   		   		   	$$ \omega \left( ( 0,\|e_N-z^{(i_0)} | c_0\|) \right) = 2^{1/\gamma - 1} \|e_N-z^{(i_0)} | c_0\| \geq 2^{1/\gamma - 1} |1-z^{(i_0)}_N| \geq 2^{1/\gamma - 1} (1- \varepsilon +e_k(T_0)).$$
   		   		   	Thus,  $(2^{1/\gamma - 1} +1) \varepsilon \geq 2^{1/\gamma - 1} (1+ e_k(T_0)).$
   		   		   	Letting $\varepsilon \rightarrow e_k(T_0)$, we obtain that $e_k(T_0) \geq 2^{1/\gamma - 1}.$
   		 On the other hand, $\vartheta(T_0(x)) = \omega(0,\|x | c_0\|) = 2^{1/\gamma - 1} \|x | c_0\| \leq 2^{1/\gamma - 1} \|x | \ell_1\|.$
   		 This implies that $e_k(T_0) \leq \|T_0\| = 2^{1/\gamma - 1}.$ Consequently, $e_k(T_0) = 2^{1/\gamma - 1}.$
   		   		   
   		Next, we will show that $\frac{1}{2}	\leq e_k(T_\infty) \leq \frac{1}{2} + \frac{1}{2^{k-1}}$ for all $k \in \{2,3,...\}.$	
   		As $\vartheta \left( T_\infty(e_i) - T_{\infty}(e_j) \right) =\omega \left( (0, \|e_i - e_j | \ell_\infty\|) \right) = 2^{1/\gamma-1}$ for all standard unit vectors $e_i$ and $e_j$ with $i \neq j,$ by (\ref{EqRelationInner&Outer}) we have 
   		\begin{equation}
   			e_k(T_\infty) \geq 2^{1-1/\gamma}f_k(T_\infty) \geq 2^{1-1/\gamma} 2^{1/\gamma - 2} = \frac{1}{2}.
   		\end{equation}
   		Let $I_\infty :\ell_1 \rightarrow \ell_\infty$  be the natural embedding, and
   		define $S_\infty:\ell_\infty \rightarrow (\R \times \ell_\infty, \vartheta)$ by $S_\infty(x) = (0,x)$ for any $x \in \ell_\infty.$
   		Observe that $S_\infty$ is obtained by substituting $X = \ell_\infty$ of the function $T$ in the proof of Theorem \ref{SharpConstant3}, so we have  $e_m(S_\infty) = 1$ for all $m \in \N.$ 
		Since $e_m(I_\infty) \leq \frac{1}{2} + \frac{1}{2^{m-1}}$ for all $m \in \{2,3,...\}$ (see \cite{C&S}, p.125), 
		it follows  that
   		\begin{equation}
   			e_k(T_\infty) \leq e_{1}(S_\infty) e_{k}(I_\infty)
   			   			\leq \frac{1}{2} + \frac{1}{2^{k-1}}
   		\end{equation} 
   		for all $k \in \{2,3,...\}.$		
		
			Now let $\iota: (\{0\}\times c_0, \vartheta) \rightarrow (\R \times \ell_\infty, \vartheta)$ be the natural embedding.
   		   	Then, $\iota T_0 = T_\infty.$
   		   	If $e_k(T_0) \leq \alpha e_k(\iota T_0)$ for some $\alpha >0$, then
   		   	$2^{1/\gamma - 1} \leq \alpha \cdot 2^{-1},$ so $\alpha$ cannot be less than $2^{1/\gamma}.$ 
   \end{example}


\section{Entropy numbers of embeddings between finite dimensional symmetric $\gamma$-Banach spaces}

In this section we give estimates for entropy numbers of embeddings between finite dimensional $\gamma$-Banach spaces with symmetric bases.
Recall that a basis $(x_i)_{i=1}^\infty$ of a $\gamma$-Banach space $X$ is called {\it symmetric} if for any permutation $\pi,$ any $\varepsilon_i \in \{\pm 1\}$ and any $a_i \in \R,$
\begin{equation*}
	\left\|  \sum_{i=1}^{\infty} a_i x_i | X\right\| =  \left\|  \sum_{i=1}^{\infty} \varepsilon_i a_{\pi (i)} x_i | X\right\|.
\end{equation*}
Let $n \in \N$ and let $X$ and $Y$ be $n$-dimensional $\gamma$-Banach spaces with normalised symmetric bases $\{x_i\}_{i=1}^n$ and $\{y_i\}_{i=1}^n$ respectively. 
We consider the natural embedding $id:X \rightarrow Y$ given by
$$id\left( \sum_{i=1}^{n} a_i x_i \right) =  \sum_{i=1}^{n} a_i y_i.$$

The following useful result which was proved by Edmunds and Netrusov can be found in \cite{E&N1998} (Section 4, Lemma 3).
\begin{lem} \label{Lem-Entropy-on-finite-dim} 
	Let $X$ be an $n$-dimensional $\gamma$-Banach space with a symmetric basis $(x_i)_{i=1}^n.$
	Then there is a constant $c(\gamma)$, depending only on $\gamma$, such that 
	\begin{equation}
		e_n(id:X \rightarrow \ell_\infty) \leq e_n(id:X \rightarrow \ell_\infty^n) \leq  \frac{c(\gamma)}{\varphi_X(n)},
	\end{equation}
	where $\varphi_{X}(n) = \left\| \sum_{i=1}^{n} x_i  | X\right\|$ is the {\it fundamental function} of $X.$
\end{lem}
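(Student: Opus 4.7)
The first inequality $e_n(id:X\to\ell_\infty)\leq e_n(id:X\to\ell_\infty^n)$ is immediate from Proposition~\ref{InjectivityOff}: the inclusion $\iota:\ell_\infty^n\hookrightarrow\ell_\infty$ extending a vector by zeros is a metric injection, and the inequality is the easy half of that proposition. The substance of the lemma is the second estimate, and my plan is to establish a decreasing-rearrangement bound for $B_X$ and then run a volume-packing argument in $\mathbb{R}^n$, after identifying $X$ with $(\mathbb{R}^n,\|\cdot|X\|)$ through the basis.

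The first substantive step would prove a rearrangement bound: for $a=\sum a_ix_i\in B_X$, writing $|a|^{\ast}_{k}$ for the $k$-th largest of $(|a_i|)_{i=1}^n$,
\begin{equation*}
|a|^{\ast}_{k}\,\varphi_X(k)\ \leq\ c_0(\gamma),\qquad k=1,\ldots,n.
\end{equation*}
Two symmetry inputs feed into this. (i) For any $I\subseteq\{1,\ldots,n\}$, the sign-flipped vector $a^{(I)}$ (reverse signs on $I$) has $\|a^{(I)}|X\|=\|a|X\|$ by symmetry, so the $\gamma$-triangle inequality applied to $a+a^{(I)}$, where the flipped entries cancel, yields $\|\sum_{i\notin I}a_ix_i|X\|\leq 2^{1/\gamma-1}\|a|X\|$. (ii) The ``lattice'' inequality
\begin{equation*}
|b_i|\leq|c_i|\ \forall i\ \Longrightarrow\ \|\textstyle\sum b_ix_i|X\|\leq (2^\gamma-1)^{-1/\gamma}\|\sum c_ix_i|X\|
\end{equation*}
follows by expanding the ratios $\theta_i:=b_i/c_i\in[-1,1]$ in signed binary $\theta_i=\sum_{k\geq 1}\eta_{i,k}2^{-k}$ with $\eta_{i,k}\in\{\pm 1\}$ and summing via the $\gamma$-triangle inequality, since each partial sum $\sum_i\eta_{i,k}c_ix_i$ has the same $X$-norm as $\sum c_ix_i$ by symmetry. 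Combining (i), applied to the complement of the top-$k$ index set $A$, with (ii), applied to compare $|a|^{\ast}_{k}\sum_{i\in A}x_i$ against $\sum_{i\in A}a_ix_i$, and using $\|\sum_{i\in A}x_i|X\|=\varphi_X(k)$, gives the rearrangement bound.

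The second step would convert this bound into the covering. The target is the Lebesgue volume estimate $\mathrm{vol}_n(B_X)^{1/n}\leq c_1(\gamma)/\varphi_X(n)$, after which the classical volume-packing inequality closes the proof: if $y_1,\ldots,y_N\in B_X$ are pairwise $2\epsilon$-separated in the $\ell_\infty^n$-metric, the disjoint translates $y_j+\epsilon B_{\ell_\infty^n}$ lie inside $B_X+\epsilon B_{\ell_\infty^n}$, which by the $\gamma$-triangle (together with the inclusion $\epsilon B_{\ell_\infty^n}\subseteq c_0(\gamma)\epsilon\varphi_X(n)B_X$ obtained by applying the lattice inequality to the constant vector) sits in a fixed dilate of $B_X$. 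Comparing volumes and choosing $\epsilon=c(\gamma)/\varphi_X(n)$ with $c(\gamma)$ large enough forces $N\leq 2^{n-1}$, which is exactly $e_n(id:X\to\ell_\infty^n)\leq\epsilon$.

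The main obstacle is the volume bound itself. In the Banach case Sch\"utt derives it via the Blaschke--Santal\'o inequality together with a lower bound on $\mathrm{vol}(B_{X^\ast})$ coming from the convex hull of $\pm\varphi_{X^\ast}(n)^{-1}x_j^{(\ast)}$; in the $\gamma$-Banach setting both halves break down, since the dual $X^\ast$ can be degenerate and $B_X$ need not be convex. Worse, the rearrangement bound alone is insufficient: the containment $B_X\subseteq\{a:|a|^{\ast}_{k}\leq c_0(\gamma)/\varphi_X(k)\}$ is already too loose by a factor $\sim\log n$ when $X=\ell_1^n$. I would therefore exploit the symmetric-norm structure more fully, writing $\|a|X\|$ as a symmetric function $F(|a|^{\ast}_{1},\ldots,|a|^{\ast}_{n})$ and applying the $\gamma$-triangle inequality to carefully chosen decompositions of $a$ in order to extract a joint, rather than merely coordinate-wise, constraint on the decreasing rearrangement; this is the delicate technical step the paper alludes to when it says that ``different techniques are needed'' in the $\gamma$-Banach setting.
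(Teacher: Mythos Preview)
The paper does not prove this lemma: it is quoted as a result of Edmunds and Netrusov (\cite{E&N1998}, Section~4, Lemma~3) and used as a black box in the proof of Theorem~\ref{Thm-EntropyInFinteDim-FundamentalFn1}. There is therefore no ``paper's own proof'' to compare your proposal against, and the phrase you quote (``different techniques are needed'') refers to the proof of Theorem~\ref{Thm-EntropyInFinteDim-FundamentalFn1}, which \emph{invokes} this lemma; it is not a hint about how to establish the lemma itself.

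On the merits of your outline: the first inequality, the rearrangement bound $|a|^{\ast}_{k}\,\varphi_X(k)\leq c_0(\gamma)$, and the packing step are all sound and standard. The gap is exactly where you say it is. You correctly observe that the coordinatewise rearrangement bound does not by itself yield $\mathrm{vol}_n(B_X)^{1/n}\leq c_1(\gamma)/\varphi_X(n)$ --- the containment $B_X\subseteq\{a:|a|^{\ast}_{k}\leq c_0/\varphi_X(k)\}$ is already off by a logarithmic factor when $X=\ell_1^n$ --- and your proposed remedy, ``applying the $\gamma$-triangle inequality to carefully chosen decompositions of $a$ in order to extract a joint constraint,'' is a description of what success would look like rather than an argument. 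Since the volume bound is the entire content of the lemma once the packing scaffolding is in place, the proposal as written does not contain a proof. If you want to complete it you will have to either supply that joint constraint explicitly or abandon the volume route in favour of a direct covering construction, as in \cite{E&N1998}.
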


 Next, we are going to estimate entropy numbers of embeddings between finite dimensional symmetric $\gamma$-Banach spaces.
 If $X$ and $Y$ are Banach spaces, this result was proved by Sch\"{u}tt in 1984 (see \cite{Sch}).
\begin{thm} \label{Thm-EntropyInFinteDim-FundamentalFn1}
	Let $X$ and $Y$ be any $n$-dimensional $\gamma$-Banach spaces  with normalised symmetric bases $\{x_i\}_{i=1}^n$ and $\{y_i\}_{i=1}^n$ respectively. 
	Then there are positive constants $c_1,c_2 $ which depend only on $\gamma$ such that, for any $k \geq n,$
	\begin{equation} \label{Eq1-Entropy-on-finite-dim}
		c_1 2^{-k/n} \frac{\varphi_Y(n)}{\varphi_X(n)} \leq e_k(id:X \rightarrow Y) \leq c_2 2^{-k/n} \frac{\varphi_Y(n)}{\varphi_X(n)}.
	\end{equation}
\end{thm}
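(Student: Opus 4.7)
The plan is to bypass Sch\"utt's original Banach-space argument, which relied on the duality relation $\varphi_X(n)\varphi_{X^\ast}(n)=n$ and on volume comparisons between $B_X$ and $B_{X^\ast}$, by routing both estimates through the classical Banach space $\ell_\infty^n$. Three tools do most of the work: the multiplicativity $e_{k_1+k_2-1}(RS)\le e_{k_1}(R)e_{k_2}(S)$ (available into $\gamma$-Banach targets, as recorded in Section 2); Lemma \ref{Lem-Entropy-on-finite-dim}, which gives $e_n(id:Z\to\ell_\infty^n)\le c(\gamma)/\varphi_Z(n)$ for every $n$-dimensional symmetric $\gamma$-Banach space $Z$; and the complementary norm identity $\|id:\ell_\infty^n\to Z\|=\varphi_Z(n)$ coming from the symmetric basis. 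For the auxiliary identity on $\ell_\infty^n$ I will use the $\gamma=1$ case of Theorem \ref{Entorpy of finite dimenal identity map}, namely $2^{(1-k)/n}\le e_k(I_{\ell_\infty^n})\le 4\cdot 2^{(1-k)/n}$.

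For the upper bound I factorise $id:X\to Y$ as $X\xrightarrow{id}\ell_\infty^n\xrightarrow{id}Y$. Multiplicativity gives
$$e_{n+k-1}(id:X\to Y)\le e_n(id:X\to\ell_\infty^n)\cdot e_k(id:\ell_\infty^n\to Y).$$
The first factor is at most $c(\gamma)/\varphi_X(n)$ by Lemma \ref{Lem-Entropy-on-finite-dim}. The second factor I bound by writing $id:\ell_\infty^n\to Y$ as $id\circ I_{\ell_\infty^n}$, so that $e_k(id:\ell_\infty^n\to Y)\le\varphi_Y(n)\cdot e_k(I_{\ell_\infty^n})\le 4\,\varphi_Y(n)\cdot 2^{(1-k)/n}$. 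Reindexing $k'=n+k-1\ge n$ gives the desired inequality $e_{k'}(id:X\to Y)\le c_2(\gamma)\cdot 2^{-k'/n}\,\varphi_Y(n)/\varphi_X(n)$.

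For the lower bound I combine two factorisations. First, from $I_{\ell_\infty^n}=(id:X\to\ell_\infty^n)\circ(id:\ell_\infty^n\to X)$ and multiplicativity I deduce $2^{(1-k)/n}\le\varphi_X(n)\cdot e_k(id:X\to\ell_\infty^n)$. Secondly, factoring $id:X\to\ell_\infty^n$ through $Y$ gives
$$e_{n+m-1}(id:X\to\ell_\infty^n)\le\frac{c(\gamma)}{\varphi_Y(n)}\,e_m(id:X\to Y).$$
Taking $k=n+m-1$ in the first display and chaining with the second produces
$$e_m(id:X\to Y)\ge\frac{\varphi_Y(n)}{c(\gamma)\,\varphi_X(n)}\cdot 2^{(2-n-m)/n},$$
which, since $2^{(2-n)/n}\ge 1/2$, is the required lower bound for every $m\ge n$.

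The main obstacle is conceptual rather than computational: without a workable dual, one must identify a substitute intermediate space, and $\ell_\infty^n$ is the natural choice precisely because Lemma \ref{Lem-Entropy-on-finite-dim} supplies a one-sided ``exchange rate'' $c(\gamma)/\varphi_Z(n)$ for every symmetric $\gamma$-Banach $Z$ while the reverse exchange $\varphi_Z(n)$ is immediate from the symmetric basis. Once this is recognised, the remainder is arithmetic: combining $c(\gamma)$ from Lemma \ref{Lem-Entropy-on-finite-dim} with the constant $4$ (or with $4^{1/\gamma}$ if one prefers to apply Theorem \ref{Entorpy of finite dimenal identity map} to $I_Y$ instead of $I_{\ell_\infty^n}$) to form explicit $c_1(\gamma)$ and $c_2(\gamma)$.
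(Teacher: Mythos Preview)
Your argument is correct and follows essentially the same route as the paper: both proofs combine multiplicativity, Theorem~\ref{Entorpy of finite dimenal identity map}, Lemma~\ref{Lem-Entropy-on-finite-dim}, and the identity $\|id:\ell_\infty^n\to Z\|=\varphi_Z(n)$, differing only in where the identity map is placed in the multiplicative chain. The one organisational difference worth noting is that the paper obtains the lower bound more economically by applying the already-established upper bound to $e_n(id:Y\to X)$ rather than running a separate factorisation through $\ell_\infty^n$.
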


\begin{proof}
	First, let us prove the upper estimate.
	By Theorem  \ref{Entorpy of finite dimenal identity map}, we have that
	\begin{align} \label{Eq2-Entropy-on-finite-dim}
		e_k(id:X \rightarrow Y) &\leq e_{k-n+1}(id:X \rightarrow X) e_n(id:X \rightarrow Y) \notag\\
			&\leq c_1 2^{\frac{1-(k-n+1)}{n}} e_n(id:X \rightarrow \ell_{\infty}^n) \|id:\ell_{\infty}^n \rightarrow Y\| \notag\\
			&\leq c_2 2^{-k/n} \varphi_{Y}(n) e_n(id:X \rightarrow \ell_{\infty}^n).
	\end{align}
	The desired estimate follows from (\ref{Eq2-Entropy-on-finite-dim}) and Lemma \ref{Lem-Entropy-on-finite-dim}.
 	
 		To prove the lower estimate, we again apply Theorem  \ref{Entorpy of finite dimenal identity map} and the upper estimate so that we have
	\begin{align*}
		2^{\frac{1-(k+n-1)}{n}} &\leq e_{k+n-1}(id:X \rightarrow X) \\
			& \leq e_k(id:X \rightarrow Y) e_n(id:Y\rightarrow X)\\
			& \leq c_3 2^{-n/n} \frac{\varphi_X(n)}{\varphi_Y(n)} e_k(id:X \rightarrow Y). 
	\end{align*}
	Consequently, $e_k(id:X \rightarrow Y) \geq c_4 2^{-k/n} \displaystyle \frac{\varphi_Y(n)}{\varphi_X(n)}$.
\end{proof}


 Finally, a direct consequence of Theorem \ref{Thm-EntropyInFinteDim-FundamentalFn1} gives us two-sided estimates for entropy numbers of embeddings between finite-dimensional Lorentz sequence spaces. 
 Let $p \in (0,\infty), ~r \in (0,\infty].$
 Recall that a {\it Lorentz sequence space} $\ell_{p,r}$ is the set of all bounded sequences $x=(x_i)_{i=1}^\infty$ such that the quasi-norm
  \begin{equation*} 
  			\|x | \ell_{p,r}\|= 	
  				\begin{cases}
  					\displaystyle \left( \sum_{j \in \N}[j^{1/p-1/r} x_j^\ast]^r \right)^{1/r}   &\text{if }  0<r<\infty,\\
  					\displaystyle \sup_{j \in \N} j^{1/p} x_j^\ast      &\text{if } r = \infty.
  				\end{cases}	
  		\end{equation*} 
 is finite.
 Here the sequence $(x_i^\ast)_{i=1}^\infty$ is the non-increasing rearrangement of $(|x_i|)_{i=1}^\infty.$
 The space $\ell_{p,p}$ is simply the sequence space $\ell_p.$
 It is well-known that $c_1 n^{1/p} \leq \varphi_{\ell_{p,r}^n}(n) \leq c_2 n^{1/p}$ for some constants $c_1$ and $c_2$, depending only on $p$ and $r.$ 
The following result is immediate from Theorem \ref{Thm-EntropyInFinteDim-FundamentalFn1}.

\begin{cor} 
	Let $k,n \in \N, ~0<p<q < \infty$ and $0<r,s \leq \infty.$ 
	If $id:\ell_{p,r}^n \rightarrow \ell_{q,s}^n$ is the natural embedding,
	then, for $k \geq n,$ there are positive constants $c_1,c_2$, independent of $k$ and $n$, such that 
	\begin{equation}
		c_1 2^{-k/n}n^{1/q-1/p} \leq e_k(id:\ell_{p,r}^n \rightarrow \ell_{q,s}^n) \leq c_2 2^{-k/n}n^{1/q-1/p}.
	\end{equation}
\end{cor}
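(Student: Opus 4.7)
The plan is to invoke Theorem \ref{Thm-EntropyInFinteDim-FundamentalFn1} directly with $X := \ell_{p,r}^n$ and $Y := \ell_{q,s}^n$; once the hypotheses are checked for a common $\gamma \in (0,1]$ depending only on $p,q,r,s$, the conclusion drops out from the known size of the fundamental functions of Lorentz sequence spaces.

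First I would verify that both $\ell_{p,r}^n$ and $\ell_{q,s}^n$ are $\gamma$-Banach for a single $\gamma$ depending only on $p,q,r,s$. The Lorentz functional is a quasi-norm, so by the Aoki-Rolewicz theorem recalled in the introduction it is equivalent to a $\gamma_0$-norm with $\gamma_0 \in (0,1]$ determined by the quasi-triangle constant, which in turn depends only on $p$ and $r$; repeating this for $\ell_{q,s}$ and taking $\gamma$ to be the smaller of the two exponents places us in the setting of the theorem. The standard unit vectors $\{e_i\}_{i=1}^n$ form a normalised symmetric basis of both spaces, since the Lorentz quasi-norm depends only on the non-increasing rearrangement $(x_j^\ast)$ and is therefore invariant under permutations and sign changes of the coordinates.

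Second, I would read off the fundamental functions. The excerpt already records that $c\, n^{1/p} \leq \varphi_{\ell_{p,r}^n}(n) \leq C\, n^{1/p}$, and analogously $c' n^{1/q} \leq \varphi_{\ell_{q,s}^n}(n) \leq C' n^{1/q}$, with constants depending only on $p,r$ and $q,s$ respectively. Consequently the ratio $\varphi_Y(n)/\varphi_X(n)$ is comparable to $n^{1/q-1/p}$ with implied constants depending only on $p,q,r,s$, and substituting into (\ref{Eq1-Entropy-on-finite-dim}) immediately yields the claimed two-sided bound for all $k \geq n$.

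The only point requiring a little care is the bookkeeping of constants: the $\gamma$-norms produced by Aoki-Rolewicz are only equivalent to the original quasi-norms, so one must track how the change-of-norm constants affect both the fundamental functions and the entropy numbers in the application of Theorem \ref{Thm-EntropyInFinteDim-FundamentalFn1}. Since those equivalence constants are uniform in $n$ (depending only on $p,q,r,s$), they are absorbed into the final $c_1$ and $c_2$, and no genuine obstacle remains; this is why the corollary can be presented as an immediate consequence.
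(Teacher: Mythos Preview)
Your proposal is correct and follows exactly the route the paper takes: the corollary is stated as ``immediate from Theorem~\ref{Thm-EntropyInFinteDim-FundamentalFn1}'' together with the recorded bound $c_1 n^{1/p} \leq \varphi_{\ell_{p,r}^n}(n) \leq c_2 n^{1/p}$. Your added remarks about choosing a common $\gamma$ via Aoki--Rolewicz and absorbing the equivalence constants into $c_1,c_2$ simply make explicit the routine verifications the paper leaves to the reader.
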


   \begin{rem}
  		For $ 0<p < q \leq \infty,$ it is known that there are positive constants $c_1,c_2,$ independent of $k$ and $n,$ such that
  		  \begin{equation} \label{Eq-Schutt-Thm}
  		   		  	c_1 \psi(k,n) \leq	e_k(id:\ell_p^n \rightarrow \ell_q^n) \leq c_2 \psi(k,n),
  		   		\end{equation}
  		  where
  		\begin{equation*} 
  			\psi(k,n)=
  		  		  		\begin{cases}
  		  		  			1 	&~\text{if}~~ 1 \leq k \leq \log_{2} n,\\
  		  		  			\left( \frac{\log_2(1+n/k)}{k} \right)^{1/p-1/q}   &~\text{if}~~ \log_{2} n \leq k \leq n.
  		  		  		\end{cases}
  		\end{equation*}
  		The Banach space case is due to Sch\"{u}tt (see \cite{Sch}, Theorem 1). 
  		The quasi-Banach case can be found in the monograph by Edmunds and Triebel (see \cite{E&T}, Proposition 3.2.2), except the lower estimate in the most interesting range $\log_2 n \leq k \leq n.$
  		This case was solved independently, with different proofs.
		First, we refer to a result which appeared implicitly in the paper of Edmunds and Netrusov in 1998 (see \cite{E&N1998}, Theorem 2); in fact, their result was proved in a more general setting; i.e., they provided two-sided estimates for $e_k(id:X \rightarrow Y),$ where $X$ and $Y$ are $n$-dimensional symmetric quasi-Banach spaces and $k < n/2.$  	
		In addition, the lower estimate of (\ref{Eq-Schutt-Thm}) in the case $\log_2 n \leq k \leq n.$  was also established by Gu\'{e}don and Litvak (see \cite{Gued&Litv}, Theorem 6) in 2000 and by K\"{u}hn (see \cite{Ku}) in 2001.
		We note that more detailed estimates of constants can also be found  in \cite{Gued&Litv}. 
		Moreover, using the interpolation arguments given in \cite{Ku}, the estimate (\ref{Eq-Schutt-Thm}) can be transferred to Lorentz space embeddings $id:\ell_{p,r}^n \rightarrow \ell_{q,s}^n$ whenever $0 < r,s \leq \infty.$
    \end{rem}







\renewcommand{\abstractname}{Acknowledgements}

\begin{abstract}
   I would like to acknowledge the School of Mathematics, University of Bristol for support of this work.
  I am very grateful to Dr.Yuri Netrusov, my supervisor, for his guidance and helpful suggestions.
  Furthermore, I would like to thank Professor David Edmunds for careful reading of the manuscript and for pointing me to the reference \cite{Hencl2003}.
  Finally, I would like to thank the referees for valuable suggestions.
  The 
  author was supported by the Ministry of Science and Technology of Thailand.
\end{abstract}

%
%

\end{document}